\documentclass[12pt]{amsart}

% Modify. June 8, 2018
% Send comments to publ@impan.pl

% Using pdf latex is preferred

\usepackage{amssymb}

%% \text{Optional}, but useful:
\usepackage{enumitem}
\usepackage{tikz}
\usepackage{tikz-cd}
\usetikzlibrary{matrix}
%% Add only when there are figures:
\usepackage{graphicx}
\usepackage{xcolor}
\usepackage{comment}
\makeatletter
\@namedef{subjclassname@2010}{%
  \textup{2020} Mathematics Subject Classification}
\makeatother

%% If you are using letters of the Polish alphabet, add
\usepackage[T1]{fontenc}
%% E.g., the name "Zoladz" is then coded \.Zo{\l}\k{a}d\'z

%% Numbered objects of "theorem" style (text italicized).
%% Below, the optional parameters indicate that everything is numbered together and "by section".
%% However, you are welcome to use any other numbering system of your choice, as well as your abbreviations.

%% The asthma package provides extended theorem environments
\usepackage{amsmath,amssymb,amsthm,amsfonts,latexsym,amscd,hyperref}
\allowdisplaybreaks
\hypersetup{colorlinks,linkcolor={blue},citecolor={blue},urlcolor={blue}}
\usepackage{xcolor}
\usepackage{mathtools}
\newtheorem{thm}{Theorem}[section]
\newtheorem{cor}[thm]{Corollary}
\newtheorem{lem}[thm]{Lemma}
\newtheorem{prop}[thm]{Proposition}
\newtheorem{qns}[thm]{Question}
\theoremstyle{definition}
\newtheorem{defn}[thm]{Definition}
\theoremstyle{remark}
\newtheorem{rem}[thm]{Remark}
\theoremstyle{note}
\newtheorem{note}[thm]{Note}
\numberwithin{equation}{section}
\newcommand{\vertiii}[1]{{\left\vert\kern-0.25ex\left\vert\kern-0.25ex\left\vert #1
    \right\vert\kern-0.25ex\right\vert\kern-0.25ex\right\vert}}

%%%%%%%%%%% For IMPAN journals:

\frenchspacing

\textwidth=16cm
\textheight=23cm
\parindent=16pt
\oddsidemargin=0.4cm
\evensidemargin=0.4cm
\topmargin=-0.5cm

%%%%%%%%%%%%%%%%%%%%%%%%%%%%%%%%%%%
%%%%%%%%%%%%%%%%%%%%%%%%%%%%%%%%%%%

%%%% Put your macros here:

%%%% Here are two examples:

%\DeclareMathOperator{\len}{length}

%\newcommand{\obj}[3]{\mathcal{F}^{#1}\mathbb{S}^{#2}\mathbf{G}_{#3}}

%%%%%%%%%%%%%

\begin{document}

%%%%% To ease editing, for IMPAN journals, add:

\baselineskip=17pt

%%%%%%%%%%%%%%%%

\title[]{Approximation property in terms of Lipschitz maps via tensor product approach}
%\author[Anil Kumar Karn]{Anil Kumar Karn}
%\address{Anil Kumar Karn, School of Mathematical Sciences, National Institute of Science Education and Research
%Bhubaneswar, An OCC of Homi Bhabha National Institute, P.O. Jatni, Khurda, Odisha 752050,
%India.}
%\email{anilkarn@niser.ac.in}
\author[Arindam Mandal]{Arindam Mandal$^\dagger$}
\address{Arindam Mandal, School of Mathematical Sciences, National Institute of Science Education and Research
Bhubaneswar, An OCC of Homi Bhabha National Institute, P.O. Jatni, Khurda, Odisha 752050,
India.}
\email{arindam.mandal@niser.ac.in}
\thanks{The author is supported by a research fellowship from the Department of Atomic Energy (DAE), Government of India.}
\thanks{$\dagger$ \tt{Corresponding author}}
\date{}

\begin{abstract}
This article explores the extension of the classical approximation property and its variants to the nonlinear framework of Lipschitz operator theory. Building on Grothendieck’s tensor product methodology, we characterize the Lipschitz approximation property of Banach spaces using Lipschitz finite-rank operators and tensor products. Furthermore, inspired by the $p$-approximation property defined via $p$-compact sets, we introduce and examine the Lipschitz $p$-approximation property. We also establish a factorization theorem for dual Lipschitz $p$-compact operators, mirroring known linear results. This paper looks more closely at how the Lipschitz approximation property and the 
$p$-approximation property of a Banach space are related to those of its Lipschitz-free space. 
\end{abstract}

\subjclass[2023]{Primary 46B28; Secondary 47L20}
\keywords{$p$-compact set, Lipschitz-free space, Lipschitz approximation property and $p$-approximation property.}

\maketitle
\pagestyle{myheadings}
\markboth{Arindam Mandal}{Approximation property in terms of Lipschitz maps via tensor product approach}
% The beginning of the text.
\section{\bf Introduction and Preliminaries}\label{intro}
%\subsection{Background of the problems}

The approximation property (a.p) is a cornerstone in the theory of Banach spaces and has been deeply intertwined with the theory of tensor products since the seminal work of Grothendieck \cite{PTTEEN}. Central to his approach is the notion that relatively compact subsets of Banach spaces can be contained within the convex hull of norm-null sequences. Formally, a Banach space 
$X$ is said to have the approximation property if the identity operator 
$Id_X$ lies in the closure (under the topology of uniform convergence on compact sets, 
$\tau_c$) of the space of finite-rank operators 
$\mathcal{F}(X, X)$. Classically, this property admits several equivalent formulations, including:
\begin{itemize}
    \item Uniform approximation of the identity operator on compact subsets by finite-rank operators;
    \item Approximation of compact operators by finite-rank operators in operator norm;
   \item The definition of the trace of nuclear operators via the projective tensor product.
\end{itemize}
In recent years, significant attention has turned toward nonlinear analogues of classical linear theories, especially in the context of Lipschitz mappings. A major development in this direction is the introduction of the Lipschitz approximation property (in short $Lip$-a.p), as formulated by Mingu Jung and Ju Myung Kim in \cite{APITLM}. A Banach space 
$X$ is said to have the $Lip$-a.p if 
$Id_X \in \overline{Lip_{0\mathcal{F}}(X, X)}^{\tau_c}$, where 
$Lip_{0\mathcal{F}}(X, X)$ denotes the space of Lipschitz finite-rank operators. This raises a natural and compelling question:
\begin{qns} \label{FOP} 
Can the Lipschitz approximation property be characterized in terms of tensor product techniques and approximation by Lipschitz compact operators, akin to the classical setting?
\end{qns}
Alongside classical advancements in approximation theory and drawing inspiration from Grothendieck's foundational work, Karn and Sinha introduced the concept of 
$p$-compact sets and the associated 
$p$-approximation property ($p$-a.p, in short) in \cite{COWAFTSLP}. Let us now consider the space of strongly $p$-summable sequences on $X$,  $\ell_p^s(X)$ (forms a Banach space with respect to usual $\|\cdot\|_p$).
A set $K \subset X$ is said to be relatively $p$-compact, for $1 \leq p < \infty$ (with $p'$ being the conjugate exponent of $p$), if there exists a strongly $p$-summable sequence $x = (x_n) \in \ell_p^s(X)$ such that $K$ is contained within the $p$-convex hull, defined as $\left\{\sum\limits_{n=1}^{\infty} a_n x_n : (a_n) \in B_{\ell_{p'}} \right\}$. In this setup, $p$-compact operators are those that map bounded sets to relatively $p$-compact sets. A Banach space 
$X$ has the 
$p$-approximation property if, for every relatively 
$p$-compact set 
$K \subset X$ and 
$\epsilon > 0$, the identity operator can be uniformly approximated on 
$K$ by a finite-rank operator. Formally:
\begin{defn}[\cite{COWAFTSLP}, Definition 6.1] 
A Banach space 
$X$ has the 
$p$-approximation property if for every 
$p$-compact set 
$K \subset X$ and every 
$\epsilon>0$, there exists a finite-rank operator 
$T$ such that 
$\|Tx-x\| < \epsilon$ for all 
$x \in K$.
\end{defn}
Naturally, one is led to ask whether this notion can be extended to the nonlinear, Lipschitz setting:

\begin{qns} \label{qns2} 
Can the classical 
$p$-approximation property be generalized to the context of Lipschitz operators? 
\end{qns}

In this article, we answer both Questions \ref{FOP} and \ref{qns2} affirmatively. We develop a characterization of the Lipschitz approximation property through the lens of tensor products and Lipschitz compact operators, and we define and study the Lipschitz 
$p$-approximation property as a natural nonlinear extension of the classical theory. Furthermore, we provide a factorization result for Lipschitz operators whose transposes lie in the dual of the Lipschitz 
$p$-compact operator ideal, mirroring the linear result of Karn and Sinha \cite{COWFTSLP}.

To support our results, we begin by reviewing the foundational concepts from the theory of Lipschitz operator ideals, Lipschitz-free spaces, and tensor products. Even though several results continue to hold in the broader setting of normed linear spaces, we focus on real Banach spaces 
$X$ and 
$Y$ throughout the paper to prevent confusion, unless specified. $B_X$ will denote the closed unit ball of $X$ where whereas $X \cong Y$ stands for $X$ is linear isometrically isomorphic to $Y$. Furthermore, for any Banach space $Y$ its dual will be denoted by $Y^{*}$. A map $f : X \xrightarrow{} Y$ is said to be Lipschitz if there exists $c\geq 0$ such that $\|f(x_{1})-f(x_{2})\| \leq c\|x_{1}-x_{2}\| \ $ for any $x_{1}, x_{2} \in X$. The space $Lip_{0}(X, Y) = \{ f: X \xrightarrow{}Y \ |\ f \mbox{ is Lipschitz and \ } f(0)=0\}$ is a Banach space concerning the Lipschitz norm, defined as $$Lip(f) = \sup\limits_{x_1, x_2 \in X;\ x_1\neq x_2} \frac{\|f(x_1)-f(x_2)\|}{\|x_1-x_2\|},$$ whenever $Y$ be a Banach space.  Elements within $Lip_{0}(X,Y)$ are commonly referred to as Lipschitz operators. In the case where $Y=\mathbb{R}$, we denote $Lip_{0}(X,\mathbb{R})$ by $X^{\#}$. 
For any $x \in X$ let us define the map  $\delta_{x} : X^{\#} \xrightarrow{} \mathbb{K}$ such that $\delta_{x}(f)=f(x)$. Then $\delta_{x}$ is bounded linear. Now the Lipschitz free space $F(X)$ is defined as $F(X) = \overline{span\{\delta_{x} : x \in X\}}^{\|.\|} \ \subset (X^{\#})^{*}$. It has been proven that the dual of $F(X)$ is $X^{\#}$ by several authors for example Nik Weaver in \cite{LA}, Vargas, Sepulcre et al. in \cite{LCO}.
The spaces $X$ and $F(X)$ are related by the Lipschitz map $\delta_{X}: X \xrightarrow{} F(X)$ such that $\delta_{X}(x) = \delta_{x}$ with $Lip(\delta_{X}) = 1$. By $$Lip_{0 \mathcal{K}}(X, Y)= \left\{f \in Lip_{0}(X, Y) : \left\{ \frac{f(x_{1})-f(x_{2})}{\|x_{1}-x_{2}\|} : x_{1}, x_{2} \in X, x_{1} \neq x_{2}\right\} \mbox{is relatively compact}  \right\}$$ we denote the set of Lipschitz compact operators. We denote the set of all bounded linear operators between the normed linear spaces $X$ and $Y$ by $L(X, Y)$, a subspace of  $Lip_{0}(X, Y)$. Additionally, we denote the complete projective tensor product of $X$ and $Y$ by $X \hat{\otimes}_{\pi} Y$ which consists of all the elements $\mathfrak{U}$ having a representation of the form $\mathfrak{U} = \sum\limits_{n=1}^{\infty} e_{n} \otimes f_{n}$, where $(e_{n})$ and $(f_{n})$ are bounded sequences in $X$ and $Y$, respectively, such that $\sum\limits_{n=1}^{\infty} \|e_{n}\|\|f_{n}\| < \infty$. 
Specifically, we explore the structure of the Banach space 
$Lip_0(X, Y)$, the role of the Lipschitz-free space 
$F(X)$, and the projective tensor product 
$X \hat{\otimes}_{\pi} Y$, all of which play central roles in our arguments. For background on tensor products of Banach spaces and their completions under different norms, we refer to \cite{ITTPOBS}.

We also draw upon and extend earlier studies into operator ideals, particularly the work of Achour, Dahia, and Turco \cite{LPCM} on Lipschitz 
$p$-compact operators and their ideals, as well as broader studies of Lipschitz operator theory by Achour, Rueda et al. \cite{LOITAP}. For background on classical operator ideals, we refer the reader to \cite[Chapters 2, 5]{ASO}.

This paper is organized as follows:
In Section \ref{TPVOLAPBS}, we present a tensor product characterization of the Lipschitz approximation property and establish that the approximation property of the Lipschitz dual 
$X^{\#}$
  implies Lipschitz approximation property for 
$X$. Section \ref{LPAPOBS} is devoted to defining and analyzing the Lipschitz 
$p$-approximation property, including examples of Banach spaces and their free spaces where this property holds.
In Section \ref{FOLPCO}, we prove a factorization theorem for certain Lipschitz operators in terms of 
$p$-summing and Lipschitz compact operators.
Finally, we conclude with remarks on the broader implications and potential future directions in the study of approximation properties within both linear and nonlinear frameworks.

Before proceeding further, we introduce the notations that will be used consistently throughout this article for the sake of clarity. The space of all 
$p$-compact operators from a Banach space 
$
X$ to another Banach space 
$Y$ is denoted by 
$\mathcal{K}_{p}(X, Y)$, while its dual operator ideal space is represented by 
$\mathcal{K}_{p}^{d}(X, Y)$, defined as $\mathcal{K}_{p}^{d}(X, Y):= \left\{T \in L(X, Y): T^* \in \mathcal{K}_{p}(Y^*, X^*)\right\}$ (see \cite[Chapter 4]{OI} and \cite{COWAFTSLP}). The class of 
$p$-summing operators between these spaces is written as 
$\prod_{p}(X, Y)$, with the respective norms denoted by 
$k_{p}(.)$, $k_{p}^{d}(.)$ and $\pi_{p}(.)$.

Within the setting of Lipschitz operators we denote Lipschitz 
$p$-compact operators, their duals, and Lipschitz 
$p$-summing operators by 
$Lip_{0K}(X, Y)$, $\mathcal{K}_{p}^{L}(X, Y)$, $\mathcal{K}_{p}^{Ld}(X, Y)$
and $\prod_{p}^{L}(X, Y)$, respectively. The associated norms are denoted as 
$k_{p}^{L}(.)$, $k_{p}^{Ld}(.)$, $\pi_{p}^{L}(.)$, depending on the context.

For more detailed discussions on these operator ideals and Lipschitz classes, we refer the reader to \cite{LOITAP}, \cite{ASO}, \cite{OI}, \cite{LCO}, and \cite{LPCM}. Throughout the rest of this article, we will also utilize key structural properties of Lipschitz-free Banach spaces, as developed in works such as \cite{LCO} and \cite{SOLAHFATA}, without further citation.
\begin{lem} \label{Lem}
\begin{enumerate}
    \item The space $X^{\#}$ can be mapped isometrically onto $F(X)^{*}$ via the transformation $Q_{X} : X^{\#} \xrightarrow{} F(X)^{*}$ given by $Q_{X}(f)(\gamma)=\gamma(f);$ $\ \mbox{for all}\ f \in X^{\#}, \gamma \in F(X)$.
    \item  For any Lipschitz mapping $f \in Lip_{0}(X,Y) \ \mbox{there exists}$ a unique linear map $\hat{f}: F(X) \xrightarrow{} F(Y)$ such that $\hat{f} \circ \delta_{X} = \delta_{Y} \circ f$ with $\|\hat{f}\| = Lip(f).$
    \item There exists a quotient map $\beta_{X} : F(X) \xrightarrow{} X$ such that $e^{*}(\beta_{X}(\mu)) = \mu(e^{*})$ for all $\mu \in F(X)$ and $e^{*} \in X^{*}$ such that $\beta_{X} \circ \delta_{X} = Id_{X}$.
    \item For every Lipschitz operator $f \in Lip_{0}(X,Y)$ there exists a unique bounded linear map $T_{f} : F(X) \xrightarrow{} Y$ such that $T_{f} \circ \delta_{X} = f$ with $\|T_{f}\|= Lip(f).$ We call this map $T_{f}$ the linearization of $f$. Also, $f \in Lip_{0 \mathcal{F}}(X,Y)$ if and only if $T_{f} \in \mathcal{F}(F(X), Y)$.
\end{enumerate}
\end{lem}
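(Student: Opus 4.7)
The plan is to establish part (4) as the fundamental universal property of the Lipschitz-free space $F(X)$, and then derive parts (1), (2), and (3) from it using standard density and duality arguments. Underlying everything is the basic fact that the canonical map $\delta_{X}: X \to F(X)$ is isometric, i.e., $\|\delta_{x} - \delta_{y}\|_{F(X)} = \|x - y\|$ for all $x, y \in X$. This can be verified via a McShane-type extension: for any fixed $x \neq y$, the function $z \mapsto \|z - y\| - \|y\|$ lies in $X^{\#}$ with Lipschitz constant $1$ and separates $x$ from $y$ by $\|x - y\|$, so testing $\delta_{x} - \delta_{y}$ against it yields the required lower bound, while the upper bound $\|\delta_{x} - \delta_{y}\|_{F(X)} \leq \|x - y\|$ is immediate from the definition of the norm on $(X^{\#})^{*}$.

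For (4), I would define $T_{f}$ on the dense subspace $\mathrm{span}\{\delta_{x} : x \in X\} \subset F(X)$ by the obvious formula $T_{f}\!\left(\sum_{i=1}^{n} a_{i}\delta_{x_{i}}\right) = \sum_{i=1}^{n} a_{i} f(x_{i})$, and establish the bound $\|T_{f}(\mu)\| \leq Lip(f)\,\|\mu\|_{F(X)}$ by dualization: for $y^{*} \in B_{Y^{*}}$, the composition $y^{*} \circ f$ lies in $X^{\#}$ with Lipschitz norm at most $Lip(f)$, and a direct computation gives $y^{*}(T_{f}(\mu)) = \mu(y^{*} \circ f)$, which is at most $Lip(f)\,\|\mu\|_{F(X)}$ since $\mu \in F(X) \subset (X^{\#})^{*}$. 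Taking the supremum over $y^{*}$ and extending by density yields a bounded linear $T_{f}: F(X) \to Y$; well-definedness and uniqueness of the extension also follow from density. The reverse inequality $Lip(f) \leq \|T_{f}\|$ comes from $f = T_{f} \circ \delta_{X}$ combined with $Lip(\delta_{X}) = 1$. The finite-rank equivalence is immediate since $f(X)$ and $T_{f}(\mathrm{span}\{\delta_{x}\})$ span the same linear subspace of $Y$, so one has finite dimension if and only if the other does.

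Parts (2) and (3) are then direct applications of (4): for (2), apply (4) to the Lipschitz map $\delta_{Y} \circ f \in Lip_{0}(X, F(Y))$ and set $\hat{f} := T_{\delta_{Y} \circ f}$, using that $Lip(\delta_{Y} \circ f) = Lip(f)$ because $\delta_{Y}$ is isometric on the range of $f$; for (3), apply (4) to $Id_{X}$ to produce $\beta_{X} := T_{Id_{X}}: F(X) \to X$, which satisfies $\beta_{X} \circ \delta_{X} = Id_{X}$ and is therefore a metric quotient, and verify the identity $e^{*}(\beta_{X}(\mu)) = \mu(e^{*})$ on the dense subspace $\mathrm{span}\{\delta_{x}\}$ by direct computation (both sides equal $e^{*}(x)$ at $\mu = \delta_{x}$) and extend by continuity. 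For (1), I would check that $Q_{X}: X^{\#} \to F(X)^{*}$ is well-defined and contractive, and construct its inverse by sending $\phi \in F(X)^{*}$ to the function $x \mapsto \phi(\delta_{x})$, which is Lipschitz with constant at most $\|\phi\|$ thanks again to $\|\delta_{x} - \delta_{y}\|_{F(X)} = \|x - y\|$. The principal obstacle across the whole lemma is precisely this isometric property of $\delta_{X}$, which rests on the McShane extension; once it is in hand, the remaining arguments reduce to routine density and duality bookkeeping.
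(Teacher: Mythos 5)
The paper gives no proof of this lemma at all: it is stated as standard background on Lipschitz-free spaces, with the reader referred to \cite{LA}, \cite{LCO}, \cite{LFBS} and \cite{SOLAHFATA}. Your argument is correct and is essentially the canonical development found in those references --- establish that $\delta_{X}$ is isometric via the distance function $z\mapsto\|z-y\|-\|y\|$, prove the universal property (4) by dualizing against $y^{*}\circ f\in X^{\#}$, and obtain (1)--(3) as corollaries --- so there is nothing to fault and nothing in the paper to contrast it with.
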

Let us denote by $\tau_{c},\tau_{p}$ the topologies of uniform convergence on compact and $p$-compact subsets of $X$, respectively, and also $\mathcal{T}\delta_{c},\mathcal{T}\delta_{p}$ the locally convex topologies on the space $L(F(X),Y)$ generated by the seminorms 
$$P_{K}(T)=\sup_{x \in K} \|T(\delta_{X}(x))\|$$ for every compact and $p$-compact subset $K$ of $ X$, respectively; for all $T \in L(F(X),Y)$.
\begin{rem}
    As any $p$-compact set is compact $\tau_{p}$ is weaker than $\tau_{c}$.
\end{rem}
\section{\bf Tensor product version of Lipschitz approximation property of a Banach space} \label{TPVOLAPBS}
As the central outcome of this section, we unveil the subsequent characterization of Lipschitz approximation property in terms of the tensor product in Theorem \ref{thm3.2}, and the following lemma will be used in proving it. 
\begin{lem} \label{lem1.1}
    Let $X$ be a Banach space such that $ Id_{F(X)} \in \overline{\mathcal{F}(F(X),F(X))}^{\mathcal{T}\delta_{c}}$ then for all Banach space $Y$, $L(F(X),Y)\subset\overline{\mathcal{F}(F(X),Y)}^{\mathcal{T}\delta_{c}}$.
\end{lem}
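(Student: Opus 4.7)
The plan is to reduce the statement for general $Y$ to the hypothesis about $F(X)$ by pre-composition. Given $T \in L(F(X),Y)$, a compact set $K \subset X$ and $\varepsilon > 0$, I want to produce $S \in \mathcal{F}(F(X),Y)$ with $P_K(T-S) < \varepsilon$. The natural candidate is $S = T \circ R$, where $R \in \mathcal{F}(F(X),F(X))$ is supplied by the hypothesis on $Id_{F(X)}$; clearly $T \circ R$ has finite-dimensional range, so it lies in $\mathcal{F}(F(X),Y)$.

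The core computation I would run is the straightforward estimate
\begin{equation*}
P_K(T - T\circ R) \;=\; \sup_{x \in K}\bigl\|T\delta_X(x) - T(R(\delta_X(x)))\bigr\| \;\leq\; \|T\|\,\sup_{x \in K}\bigl\|\delta_X(x) - R(\delta_X(x))\bigr\| \;=\; \|T\|\,P_K(Id_{F(X)} - R).
\end{equation*}
By the hypothesis, $Id_{F(X)}$ belongs to the $\mathcal{T}\delta_c$-closure of $\mathcal{F}(F(X),F(X))$, so I can choose the net (or, when needed, element) $R$ so that $P_K(Id_{F(X)} - R) < \varepsilon / (\|T\|+1)$; passing to $T\circ R$ then yields $P_K(T - T\circ R) < \varepsilon$, which is exactly what membership of $T$ in $\overline{\mathcal{F}(F(X),Y)}^{\mathcal{T}\delta_c}$ demands.

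Since the topology $\mathcal{T}\delta_c$ is generated by the family of seminorms $P_K$ indexed by compact subsets $K$ of $X$, approximation seminorm-by-seminorm (for each $K$ and each $\varepsilon$ individually) suffices, and I do not need to patch together a single net that works for all $K$ simultaneously. The argument is essentially bookkeeping; the only thing to verify carefully is that composition on the left by a fixed $T$ is continuous from $(L(F(X),F(X)), \mathcal{T}\delta_c)$ into $(L(F(X),Y), \mathcal{T}\delta_c)$, which is precisely what the displayed inequality says. I do not anticipate a real obstacle here: the lemma is a soft functorial consequence of the universal role of $F(X)$ (part (4) of Lemma \ref{Lem}) together with the fact that composing a finite-rank map with any bounded map again gives a finite-rank map.
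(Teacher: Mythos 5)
Your proposal is correct and follows essentially the same route as the paper: pick a finite-rank $R$ with $P_K(Id_{F(X)}-R)$ small, compose on the left with $T$, and use the estimate $P_K(T-TR)\leq \|T\|\,P_K(Id_{F(X)}-R)$. The only cosmetic difference is that you divide by $\|T\|+1$ to avoid treating $T=0$ separately, whereas the paper simply assumes $T\neq 0$.
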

\begin{proof}
    Let $Y$ be a Banach space, $T (\neq 0) \ \in L(F(X),Y)$, $K$ be a compact subset of $X$ and $\epsilon>0$.
    Therefore, there exists $S \in \mathcal{F}(F(X),F(X))$ such that $\sup\limits_{x \in K}\|\delta_{X}(x) - S\delta_{X}(x)\| < \frac{\epsilon}{\|T\|}$.\\
    Now
    \begin{eqnarray*}
        \sup\limits_{x \in K}\|T\delta_{X}(x) - TS\delta_{X}(x)\| \leq \|T\| \sup\limits_{x \in K}\|\delta_{X}(x) - S\delta_{X}(x)\| < \epsilon 
    \end{eqnarray*}
    Hence, the result follows.
\end{proof}
The following theorem provides a tensor-product characterization of the Lipschitz approximation property, thereby answering Question \ref{FOP}.
\begin{thm}\label{thm3.2}
    For a Banach space $X$, the following are equivalent:
    \begin{enumerate}
        \item $X$ has $Lip- a.p$
        \item  $ Id_{F(X)} \in \overline{\mathcal{F}(F(X),F(X))}^{\mathcal{T}\delta_{c}}$.
        \item For any Banach space $Y$ and if $\mathfrak{U} = \sum\limits_{n=1}^{\infty} \delta_{x_{n}} \otimes y_{n} \ \in F(X) \hat{\otimes}_{\pi} Y$ where $(x_{n})$ and $(y_{n})$ are bounded sequences in $X$ and $Y$, respectively, such that $\sum\limits_{n=1}^{\infty} \|x_{n}\|\|y_{n}\| < \infty$ and if $\sum\limits_{n=1}^{\infty}\psi(y_{n}) \delta_{x_{n}} = 0, \ \mbox{for all} \ \psi \in Y^{*}$, then $\mathfrak{U} = 0.$
        
        \item For any Banach space $Y$ and if $\mathfrak{U} = \sum\limits_{n=1}^{\infty} \delta_{x_{n}} \otimes y_{n} \ \in F(X) \hat{\otimes}_{\pi} Y$ where $(x_{n})$ and $(y_{n})$ are bounded sequences in $X$ and $Y$ respectively such that $\sum\limits_{n=1}^{\infty} \|x_{n}\|\|y_{n}\| < \infty$ and if $\sum\limits_{n=1}^{\infty} \phi(\delta_{x_{n}})y_{n}  = 0, \ \mbox{for all} \ \phi \in F(X)^{*}$, then $\mathfrak{U} = 0.$ 

        \item If $\mathfrak{U} = \sum\limits_{n=1}^{\infty} \phi_{n} \otimes \delta_{x_{n}} \ \in F(X)^{*} \hat{\otimes}_{\pi} F(X)$ where $(\phi_{n})$ and $(x_{n})$ are bounded sequences in $F(X)^{*}$ and $X$ respectively such that $\sum\limits_{n=1}^{\infty} \|\phi_{n}\|\|x_{n}\| < \infty$ and if $\sum\limits_{n=1}^{\infty}\phi_{n}(\delta_{x}) \delta_{x_{n}} = 0, \ \mbox{for all} \ x \in X$, then $\mathfrak{U} = 0.$ 
        
    \end{enumerate}
\end{thm}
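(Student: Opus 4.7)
The plan is to establish the cycle $(1) \Leftrightarrow (2) \Rightarrow (5) \Rightarrow (3) \Leftrightarrow (4) \Rightarrow (2)$, driven by the linearization correspondence of Lemma \ref{Lem} together with Grothendieck's classical tensor-product characterization of the approximation property. The core bridge between the Lipschitz and the linear worlds is the pair of maps $\delta_{X}:X\to F(X)$ and $\beta_{X}:F(X)\to X$, combined with the identification $f\leftrightarrow T_{f}$.

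For $(1)\Leftrightarrow (2)$: the direction $(2)\Rightarrow (1)$ is the clean one. Given a compact $K\subset X$, $\epsilon>0$, and $S\in \mathcal{F}(F(X),F(X))$ with $\sup_{x\in K}\|\delta_{x}-S\delta_{x}\|<\epsilon$, set $f:=\beta_{X}\circ S\circ \delta_{X}\in Lip_{0}(X,X)$; then $T_{f}=\beta_{X}\circ S$ is finite rank, so $f\in Lip_{0\mathcal{F}}(X,X)$, and $\|f(x)-x\|=\|\beta_{X}(S\delta_{x}-\delta_{x})\|\leq \|\delta_{x}-S\delta_{x}\|<\epsilon$ on $K$. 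The converse $(1)\Rightarrow (2)$ is more delicate because the naive candidate $T_{\delta_{X}\circ f}$ need not be finite rank: its range spans $F(V)$ for $V=\operatorname{span}(f(X))$, which is infinite-dimensional. The cleanest route is via the equivalence between Lip-a.p.\ of $X$ and the classical a.p.\ of $F(X)$ in the $\tau_{c}$-topology (established by linearization), after which $(2)$ follows since $\mathcal{T}\delta_{c}$ is weaker than $\tau_{c}$.

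For $(2)\Rightarrow (5)$, apply Lemma \ref{lem1.1} with $Y=F(X)$ to conclude that every $T\in L(F(X),F(X))$ lies in $\overline{\mathcal{F}(F(X),F(X))}^{\mathcal{T}\delta_{c}}$. Given $\mathfrak{U}=\sum_{n}\phi_{n}\otimes\delta_{x_{n}}\in F(X)^{*}\hat{\otimes}_{\pi}F(X)$ with $\sum_{n}\phi_{n}(\delta_{x})\delta_{x_{n}}=0$ for every $x\in X$, Hahn--Banach reduces $\mathfrak{U}=0$ to showing $\langle B,\mathfrak{U}\rangle=0$ for every bounded bilinear form $B$ on $F(X)^{*}\times F(X)$; such a form corresponds to an operator $T\in L(F(X),F(X))$, and approximating $T$ in $\mathcal{T}\delta_{c}$ on the compact set $\overline{\{x_{n}\}}\cup\{0\}\subset X$ by finite-rank operators, together with the usual trace pairing, forces the required vanishing via the hypothesis on $\mathfrak{U}$. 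The equivalence $(3)\Leftrightarrow (4)$ is a direct reformulation, since the operators $Y^{*}\to F(X)$ and $F(X)^{*}\to Y$ induced by $\mathfrak{U}$ are transposes of one another and hence have coinciding kernels. The implications $(5)\Rightarrow (3)$ and $(4)\Rightarrow (2)$ are then standard Grothendieck injectivity arguments, where the density of $\operatorname{span}\,\delta_{X}(X)$ in $F(X)$ allows an arbitrary element of $F(X)\hat{\otimes}_{\pi}Y$ or $F(X)^{*}\hat{\otimes}_{\pi}F(X)$ to be put into the restricted form stated in $(3)$, $(4)$, or $(5)$.

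The main obstacle I anticipate is the implication $(1)\Rightarrow (2)$, for the reason noted above: the failure of $\delta_{X}$ to be linear prevents a direct lifting of Lipschitz finite-rank approximation of $Id_{X}$ on $K$ to a linear finite-rank approximation of $Id_{F(X)}$ on $\delta_{X}(K)$. The argument must either invoke (or re-derive) the equivalence of Lip-a.p.\ of $X$ with the classical a.p.\ of $F(X)$, or construct an ad hoc finite-rank projection on $F(X)$ compatible with the given Lipschitz approximant. Everything else in the cycle then reduces to routine Grothendieck-style manipulations of projective tensor products.
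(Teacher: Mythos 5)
Your overall strategy coincides with the paper's: both proofs run a Grothendieck-style cycle through the linearization $f\leftrightarrow T_f$, with the same key ingredients (Lemma \ref{lem1.1} plus trace-duality for the forward implications, and a Hahn--Banach separation for the return to $(2)$); only the ordering of the cycle differs. Your sketch of $(2)\Rightarrow(5)$ is essentially the paper's $(2)\Rightarrow(3)$ argument transplanted, and it works, modulo the small slip that $(F(X)^{*}\hat{\otimes}_{\pi}F(X))^{*}\cong L(F(X),F(X)^{**})$, not $L(F(X),F(X))$ --- harmless, since Lemma \ref{lem1.1} applies to any target space. Your observation that $(3)\Leftrightarrow(4)$ because the two induced operators are transposes of one another is exactly the paper's computation $\psi\bigl(\sum_n\phi(\delta_{x_n})y_n\bigr)=\phi\bigl(\sum_n\psi(y_n)\delta_{x_n}\bigr)$.

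There are, however, two genuine problems. First, your proposed route for $(1)\Rightarrow(2)$ --- passing through ``the equivalence of Lip-a.p.\ of $X$ with the classical a.p.\ of $F(X)$'' and then weakening $\tau_c$ to $\mathcal{T}\delta_c$ --- rests on an equivalence that is not available: only the implication ``$F(X)$ has a.p.\ $\Rightarrow$ $X$ has Lip-a.p.'' is known, and the converse is explicitly left open in this very paper. Statement $(2)$ is deliberately weaker than the a.p.\ of $F(X)$ (approximation only on sets $\delta_X(K)$, $K\subset X$ compact), and that is what makes $(1)\Leftrightarrow(2)$ true. The actual proof (in \cite{APITLM}, reproduced in the $p$-compact setting in Theorem \ref{thm0.2}) resolves exactly the obstruction you identified --- that $T_{\delta_X\circ f}$ has range in the infinite-dimensional $F(E)$, $E=\mathrm{span}(f(X))$ --- by invoking the Godefroy--Kalton result that $F(E)$ has the $1$-bounded approximation property for finite-dimensional $E$, and composing $\hat f$ with a finite-rank approximant on $F(E)$. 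Second, the closing implications $(5)\Rightarrow(3)$ and $(4)\Rightarrow(2)$ cannot be dispatched by ``density of $\mathrm{span}\,\delta_X(X)$'': the substantive step is a Hahn--Banach separation in $(L(F(X),F(X)),\mathcal{T}\delta_c)$ producing $\Phi$ with $\Phi|_{\mathcal F}=0$ and $\Phi(Id_{F(X)})=1$, followed by the representation of $\mathcal{T}\delta_c$-continuous functionals in the form $T\mapsto\sum_n\phi_n(T\delta_{x_n})$ with $(\phi_n)\in\ell_1(F(X)^*)$ and $(x_n)$ drawn from a compact subset of $X$; this representation is precisely what forces the special shape of the tensor in $(5)$ and yields the contradiction $\sum_n\phi_n(\delta_{x_n})=1$ versus $\sum_n\phi_n(\delta_x)\delta_{x_n}=0$. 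Moreover a direct $(5)\Rightarrow(3)$ is awkward; the natural route (and the paper's) is $(5)\Rightarrow(2)\Rightarrow(3)$, which would make part of your cycle redundant. Both gaps are repairable, but as written the proposal does not close the loop.
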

\begin{proof}
    $(1) \iff (2)$ part has already been proved in \cite[Theorem~3.1]{APITLM}. 
    
    $\underline{(2) \implies(3)}$ \\
    Let $\mathfrak{U}$ have such a representation for a Banach space $Y$ as in $(3)$ with $\sum\limits_{n=1}^{\infty}\psi(y_{n}) \delta_{x_{n}} = 0, \ \mbox{\ for \ all} \ \psi \in Y^{*}$. We shall show $T(\mathfrak{U})=0$ for all $T \in (F(X) \hat{\otimes}_{\pi} Y)^{*} \cong L(F(X),Y^{*})$.
    
    Since $\sum\limits_{n=1}^{\infty} \|x_{n}\|\|y_{n}\| < \infty$ without loss of generality, we can assume $(x_{n})$ converges to $0$ and $\sum\limits_{n=1}^{\infty}\|y_{n}\| < \infty$.
    Suppose $T \in L(F(X),Y^{*})$ and $\epsilon>0.$ Therefore, by Lemma \ref{lem1.1}, for the compact set $\{x_{n} : n \in \mathbb{N}\} \cup \{0\}$ $\mbox{\ there  exists  } \ S \in \mathcal{F}(F(X),Y^{*})$ such that $\|T\delta_{X}(x_{n}) - S\delta_{X}(x_{n})\| < \epsilon$ for all $n \in \mathbb{N}$. Since $S$ is a finite-rank operator in $L(F(X),Y^{*})$, $S = \sum\limits_{i=1}^{m} \phi_{i} \otimes \psi_{i}$, where $\phi_{i}\in F(X)^{*}$ and $\psi_{i}\in Y^{*}$ for $i =1,...,m$. It is easy to verify that $S(\mathfrak{U})=0$.\\
    Now, 
    \begin{eqnarray*}
        |T(\mathfrak{U})| = |(T-S)(\mathfrak{U}|&=&|\sum\limits_{n=1}^{\infty} (T-S)(\delta_{x_{n}} \otimes y_{n} )|\\
        &\leq& \sum\limits_{n=1}^{\infty} \|(T-S)(\delta_{x_{n}})\|\|y_{n}\|\\
        &\leq& \epsilon \sum\limits_{n=1}^{\infty}\|y_{n}\|. 
    \end{eqnarray*}
    Since $\epsilon>0$ is arbitrary $\mathfrak{U} = 0$.\\
  $\underline{(3) \implies(4)}$\\
  Let us assume $Y$ is a Banach space and $\mathfrak{U} = \sum\limits_{n=1}^{\infty} \delta_{x_{n}} \otimes y_{n} \ \in F(X) \hat{\otimes}_{\pi} Y$, where $(x_{n})$ and $(y_{n})$ are bounded sequences in $X$ and $Y$, respectively, with $\sum\limits_{n=1}^{\infty} \phi(\delta_{x_{n}})y_{n}  = 0 \ \mbox{for all} \ \phi \in F(X)^{*}$. Now, for all $ \phi \in F(X)^{*}, \psi \in Y^{*}$; $0 = \psi(\sum\limits_{n=1}^{\infty} \phi(\delta_{x_{n}})y_{n}) = \phi(\sum\limits_{n=1}^{\infty} \psi(y_{n})\delta_{x_{n}})$. Hence, it immediately follows $\mathfrak{U} =0$.
  
  $\underline{(4) \implies(5)}$\\
  Suppose $\mathfrak{U} = \sum\limits_{n=1}^{\infty} \phi_{n} \otimes \delta_{x_{n}} \ \in F(X)^{*} \hat{\otimes}_{\pi} F(X)$, where $(\phi_{n})$ and $(x_{n})$ are bounded sequences in $F(X)^{*}$ and $X$, respectively, such that $\sum\limits_{n=1}^{\infty} \|\phi_{n}\|\|x_{n}\| < \infty$ with $\sum\limits_{n=1}^{\infty}\phi_{n}(\delta_{x}) \delta_{x_{n}} = 0 $ for all $ x \in X$. Therefore, for all $x \in X$ and for any linear functional $\gamma$ on $F(X)$,  $\gamma\big(\sum\limits_{n=1}^{\infty}\phi_{n}(\delta_{x}) \delta_{x_{n}} \big)=0.$ This is equivalent as $\Big(\sum\limits_{n=1}^{\infty} \gamma(\delta_{x_{n}}) \phi_{n}\Big)(\delta_{x}) = 0$ for all $x \in X$ and $\gamma \in F(X)^{*}.$ Thus, for all linear functionals $\gamma$ of $F(X)$, $\sum\limits_{n=1}^{\infty} \gamma(\delta_{x_{n}}) \phi_{n} =0$.  It follows that $\sum\limits_{n=1}^{\infty} \phi_{n} \otimes \delta_{x_{n}} =0$, which is  same as $\mathfrak{U}=0$.
  
$\underline{(5) \implies(2)}$\\
 We will demonstrate that if the statement in $(2)$ is false, the statement in $(5)$ is also false.\\
   Suppose  $ Id_{F(X)} \not \in \overline{\mathcal{F}(F(X),F(X))}^{\mathcal{T}\delta_{c}}$. Consider $E = (L(F(X),F(X)), \mathcal{T}\delta_{c})$ and $G$ be a subspace of $E$ consisting of all finite-rank operators. Therefore, $\{Id_{F(X)}\} \cap \overline{G}^{\mathcal{T}\delta_{c}} = \emptyset$. Hence, by the geometric version of the Hahn-Banach theorem, there exists $\Phi \in E^{*}$ such that $\Phi(T)=0$ for all $T \in G$ and $\Phi(Id_{F(X)}) = 1$.\\
   As $\Phi \in E^{*}$, there exists a compact subset $K$ of $X$ such that $|\Phi(T)| \leq P_{K}(T) \ \ \forall \ T \in E$. Since $K$ is compact, there exists $(x_{n}) \in c_{0}(X)$ such that $K$ is contained in the closed convex hull of the sequence $(x_n)$. Thus $|\Phi(T)| \leq \sup\limits_{n} \|T\delta_{x_{n}}\|$. 
   
   Consider $Z = \{(T\delta_{x_{n}}) : T \in E\}$ subspace of $c_{0}(F(X))$. Then $\Tilde{\Phi} : Z \xrightarrow{} \mathbb{K}$ is defined as $\Tilde{\Phi}((T\delta_{x_{n}})) = \Phi(T)$ is a well-defined bounded linear map, and let  $\Tilde{\Phi}_{ext}\in \ c_{0}(F(X))^{*}$ be the Hahn-Banach extension. Therefore, $\Tilde{\Phi}_{ext}((T\delta_{x_{n}}))= \Phi(T) = \sum\limits_{n=1}^{\infty}\phi_{n}(T\delta_{x_{n}})$ for $(\phi_{n}) \in \ell_{1}(F(X)^{*})$ and for all $T \in E$. Hence, $1= \Phi(Id_{F(X)}) = \sum\limits_{n=1}^{\infty}\phi_{n}(\delta_{x_{n}})$ while $\Phi(T)=0$ for all $T \in G$, in particular for $T = \Psi \otimes \delta_{x}$ for all $\Psi \in F(X)^{\ast}$ and $x \in X$. 
   This implies for all $\Psi \in F(X)^{\ast}$ and $x \in X$
   $$0 =  \Phi(T) = \sum\limits_{n=1}^{\infty}\phi_{n}(T\delta_{x_{n}}) = \sum\limits_{n=1}^{\infty} \Psi(\delta_{x_{n}}) \Phi_{n}(\delta_{x})= \Psi \left(\sum\limits_{n=1}^{\infty}\Phi_{n}(\delta_{x})\delta_{x_{n}}\right)$$
   Therefore, it follows
   $\sum\limits_{n=1}^{\infty}\phi_{n}(\delta_{x})\delta_{x_{n}} = 0$ for all $x  \in X$. Therefore, $(5)$ does not hold, as if it holds then $\sum\limits_{n=1}^{\infty}\phi_{n}\otimes \delta_{x_{n}} = 0$ and it gives us $\sum\limits_{n=1}^{\infty}\phi_{n}(\delta_{x_{n}})=0$, which is a contradiction. Hence, the proof follows.
\end{proof}
\begin{cor}
    Let $X$ and $Y$ be Banach spaces, with $X$ having $Lip-a.p$. Then the map \\
    $\mathcal{J} : F(X)^{*} \hat{\otimes}_{\pi} Y \xrightarrow{} N(F(X),Y)$ defined as $\mathcal{J}(u)(\delta_{x}) = \sum\limits_{n=1}^{\infty}\phi_{n}(\delta_{x})y_{n}$ is one-one, where $u= \sum\limits_{n=1}^{\infty}\phi_{n} \otimes y_{n}\in F(X)^{*} \hat{\otimes}_{\pi} Y$.
\end{cor}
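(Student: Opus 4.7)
The plan is to suppose $\mathcal{J}(u) = 0$ for some $u = \sum_{n=1}^{\infty} \phi_n \otimes y_n \in F(X)^{*}\hat{\otimes}_{\pi} Y$ and deduce $u = 0$, using Theorem~\ref{thm3.2} as the principal tool. First I would unpack the hypothesis: $\mathcal{J}(u)=0$ says $\sum_n \phi_n(\delta_x) y_n = 0$ for every $x \in X$, which by the density of $\mathrm{span}\{\delta_x : x \in X\}$ in $F(X)$ and continuity is equivalent to $\sum_n \phi_n(\gamma)y_n = 0$ for all $\gamma \in F(X)$. Applying an arbitrary $\psi \in Y^{*}$ to this identity and using the same density argument in $F(X)^{*}$ would then give $\sum_n \psi(y_n)\phi_n = 0$ in $F(X)^{*}$ for every $\psi \in Y^{*}$.

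To conclude $u = 0$, I would pair $u$ with an arbitrary $\Phi \in (F(X)^{*}\hat{\otimes}_{\pi}Y)^{*} \cong L(Y, F(X)^{**})$; writing $\Phi \leftrightarrow T$ with $\langle T(y), \phi\rangle = \langle \Phi, \phi\otimes y\rangle$, the pairing is $\Phi(u) = \sum_n T(y_n)(\phi_n)$. The core case is when $T$ has range in $F(X) \subset F(X)^{**}$: setting $\gamma_n := T(y_n) \in F(X)$, one gets $\sum_n \phi_n(\delta_x)\gamma_n = T\bigl(\sum_n \phi_n(\delta_x)y_n\bigr) = 0$ for every $x \in X$, so the tensor $\sum_n \phi_n \otimes \gamma_n \in F(X)^{*}\hat{\otimes}_{\pi}F(X)$ satisfies the hypothesis of Theorem~\ref{thm3.2}(5), extended from $\gamma_n = \delta_{x_n}$ to general $\gamma_n \in F(X)$ by approximating each $\gamma_n$ by finite combinations of $\delta_x$'s and exploiting the absolute convergence of the series. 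This extended (5) forces $\sum_n \phi_n\otimes \gamma_n = 0$ in $F(X)^{*}\hat{\otimes}_{\pi}F(X)$, and pairing with the canonical trace functional $\phi\otimes\gamma \mapsto \phi(\gamma)$ (which is continuous of norm one) yields $\sum_n \phi_n(\gamma_n) = 0$, i.e.\ $\Phi(u) = 0$.

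For general $T \in L(Y, F(X)^{**})$ whose range need not lie in $F(X)$, I would invoke the principle of local reflexivity: given $\epsilon > 0$ and $N$, PLR produces an operator $\widetilde{T}: \mathrm{span}\{y_1,\ldots,y_N\} \to F(X)$ with $\|\widetilde{T}\| \leq (1+\epsilon)\|T\|$ such that $\widetilde{T}(y_j)(\phi_i) = T(y_j)(\phi_i)$ for all $i, j \leq N$. The absolute convergence $\sum_n \|\phi_n\|\|y_n\| < \infty$ makes the tail beyond index $N$ arbitrarily small, so the previous paragraph gives $\sum_{n\leq N}\widetilde{T}(y_n)(\phi_n) = 0$, and passing $N \to \infty$ and $\epsilon \to 0$ forces $\Phi(u) = 0$. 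Since $\Phi$ was arbitrary, $u = 0$.

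The main obstacle I expect is the final passage from $T$ into $F(X)$ to general $T$ into $F(X)^{**}$: matching the PLR parameters to the tail of the series while preserving the agreement on the relevant finite sets of $\phi_n$'s is the delicate bookkeeping step. As an alternative route that avoids PLR, one could first upgrade Theorem~\ref{thm3.2}(5) to the statement ``for general $\gamma_n \in F(X)$'' and then apply classical Grothendieck machinery to identify the $Lip$-a.p.\ of $X$ with the a.p.\ of $F(X)^{*}$, for which the injectivity of $\mathcal{J}$ onto the nuclear operators is standard.
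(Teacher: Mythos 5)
Your reduction of $\Phi(u)=0$ to the vanishing of the auxiliary tensor $w=\sum_{n}\phi_{n}\otimes\gamma_{n}$ with $\gamma_{n}=T(y_{n})$ is set up correctly, but the step on which everything rests --- upgrading Theorem \ref{thm3.2}(5) from second legs of the special form $\delta_{x_{n}}$ to arbitrary $\gamma_{n}\in F(X)$ ``by approximating each $\gamma_{n}$ by finite combinations of $\delta_{x}$'s'' --- is a genuine gap, not bookkeeping. The extended statement you need reads: whenever $w\in F(X)^{*}\hat{\otimes}_{\pi}F(X)$ induces the zero operator on $F(X)$, then $w=0$; by Grothendieck's classical trace criterion this is \emph{equivalent} to the approximation property of $F(X)$. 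That is strictly stronger than the hypothesis: the Lipschitz a.p.\ of $X$ only supplies finite-rank approximations of $Id_{F(X)}$ uniformly on sets of the form $\delta_{X}(K)$ with $K\subset X$ compact (the topology $\mathcal{T}\delta_{c}$), whereas $\{T(y_{n})\}$ is an arbitrary norm-null sequence in $F(X)$. Whether the Lipschitz a.p.\ of $X$ implies the a.p.\ of $F(X)$ is precisely one of the open questions recorded at the end of the paper, so your extension of (5) cannot be obtained this way. The approximation idea does not repair it: item (5) is an exact implication with no quantitative form, so replacing each $\gamma_{n}$ by a finite combination of point evaluations destroys the exact vanishing hypothesis and yields no estimate on the original tensor. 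The same objection defeats your alternative route: Corollary \ref{cor3.5} gives that a.p.\ of $F(X)^{*}=X^{\#}$ implies the Lipschitz a.p.\ of $X$, not the converse. (The local-reflexivity step in your last paragraph is standard and would be fine, but it is predicated on the broken core case.)

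For what it is worth, the paper's own proof is a one-line appeal to ``the above theorem,'' and the only item of Theorem \ref{thm3.2} that literally applies is (5), i.e.\ the case $Y=F(X)$ with $y_{n}=\delta_{x_{n}}$. So you have correctly located the difficulty that the paper glosses over; but the bridge you propose does not close it, and as far as I can see the corollary for general $Y$ and general $u$ requires either the a.p.\ of $F(X)$ or a genuinely new argument exploiting the structure of $F(X)$.
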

\begin{proof}
    Let $X$ be a Banach space having $Lip-a.p$ and $u \in Ker(\mathcal{J}) \subset F(X)^{*} \hat{\otimes}_{\pi} Y $. Therefore, there exist bounded sequences $(\phi_{n})$ and $(y_{n})$ in $F(X)^{*}$ and $Y$ respectively with $\sum\limits_{n=1}^{\infty} \|\phi_{n}\|\|x_{n}\| < \infty$ such that $u= \sum\limits_{n=1}^{\infty}\phi_{n} \otimes y_{n}\in F(X)^{*} \hat{\otimes}_{\pi} Y$. Now $\mathcal{J}(u)=0$ gives $\sum\limits_{n=1}^{\infty}\phi_{n}(\delta_{x})y_{n}=0$ for all $x \in X.$ Hence from the above theorem we have $u=0$. This completes the proof.
\end{proof}
\begin{prop} \label{Prop3.4} Let $X$ be a Banach space. Then the following are equivalent:
\begin{enumerate}
    \item $X$ has the Lipschitz approximation property.
    \item For any Banach space $Y$ and any $f \in Lip_{0}(X,F(Y))$,   $ \ \beta_{Y}\circ f \in \overline{Lip_{0\mathcal{F}}(X,Y)}^{\tau_{c}}$.  
    \end{enumerate}
\end{prop}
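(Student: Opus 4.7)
The plan is to prove the two implications by translating between the nonlinear topology $\tau_{c}$ on $Lip_{0}(X,\cdot)$ and the linear topology $\mathcal{T}\delta_{c}$ on $L(F(X),\cdot)$, using composition with $\delta_{X}$ as the bridge, and to leverage Lemma~\ref{Lem}, Lemma~\ref{lem1.1}, and Theorem~\ref{thm3.2} already in hand.

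For $(2)\Rightarrow(1)$, I would simply specialise (2) to $Y=X$ and take $f=\delta_{X}\in Lip_{0}(X,F(X))$; Lemma~\ref{Lem}(3) gives $\beta_{X}\circ\delta_{X}=Id_{X}$, and (2) then yields $Id_{X}\in\overline{Lip_{0\mathcal{F}}(X,X)}^{\tau_{c}}$, which is the definition of Lip-a.p. For $(1)\Rightarrow(2)$, fix any Banach space $Y$ and $f\in Lip_{0}(X,F(Y))$, and pass to the linearization $T_{f}\in L(F(X),F(Y))$ from Lemma~\ref{Lem}(4), so that $T_{f}\circ\delta_{X}=f$. The composition $\beta_{Y}\circ T_{f}\colon F(X)\to Y$ is a bounded linear map. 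By Theorem~\ref{thm3.2} the Lip-a.p of $X$ is equivalent to $Id_{F(X)}\in\overline{\mathcal{F}(F(X),F(X))}^{\mathcal{T}\delta_{c}}$, and Lemma~\ref{lem1.1} then promotes this to $L(F(X),Y)\subset\overline{\mathcal{F}(F(X),Y)}^{\mathcal{T}\delta_{c}}$. Hence for any compact $K\subset X$ and $\epsilon>0$ there exists $S\in\mathcal{F}(F(X),Y)$ with
\[
\sup_{x\in K}\,\bigl\|(\beta_{Y}\circ T_{f})(\delta_{x})-S(\delta_{x})\bigr\|<\epsilon.
\]
Setting $h:=S\circ\delta_{X}$, the linearization of $h$ is $S$ (by uniqueness in Lemma~\ref{Lem}(4)), which is finite-rank, so $h\in Lip_{0\mathcal{F}}(X,Y)$; and since $(\beta_{Y}\circ T_{f})(\delta_{x})=\beta_{Y}(f(x))$, the displayed inequality becomes $\sup_{x\in K}\|\beta_{Y}(f(x))-h(x)\|<\epsilon$, giving $\beta_{Y}\circ f\in\overline{Lip_{0\mathcal{F}}(X,Y)}^{\tau_{c}}$.

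There is no substantial obstacle in this plan: the argument is essentially a bookkeeping translation made possible because Lemma~\ref{lem1.1} is, by design, the linear counterpart of the desired approximation statement. The one conceptual point worth emphasising is that $f$ itself need not have finite-dimensional range into $F(Y)$ and so cannot be approximated directly by Lipschitz finite-rank maps targeting $F(Y)$; pre-composing with the quotient $\beta_{Y}$ is exactly what allows the approximation to be carried out inside $Y$, where the finite-rank linear approximants of $\beta_{Y}\circ T_{f}$ pull back along $\delta_{X}$ to honest Lipschitz finite-rank maps.
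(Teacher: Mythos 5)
Your proof is correct, and the $(2)\Rightarrow(1)$ direction coincides exactly with the paper's. For $(1)\Rightarrow(2)$ you take a genuinely different, though closely parallel, route. The paper invokes the external result \cite[Proposition~3.2]{APITLM} --- that the Lip-a.p.\ of $X$ yields $Lip_{0}(X,Z)=\overline{Lip_{0\mathcal{F}}(X,Z)}^{\tau_{c}}$ for \emph{every} Banach space $Z$ --- applied with $Z=F(Y)$ to approximate $f$ itself by a Lipschitz finite-rank $g\in Lip_{0\mathcal{F}}(X,F(Y))$, and only then post-composes with the norm-one quotient $\beta_{Y}$. You instead linearize and post-compose first, forming $\beta_{Y}\circ T_{f}\in L(F(X),Y)$, approximate this \emph{linear} operator in the $\mathcal{T}\delta_{c}$ topology using the paper's internal machinery (Theorem~\ref{thm3.2}(2) together with Lemma~\ref{lem1.1}), and pull the finite-rank linear approximant $S$ back along $\delta_{X}$. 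Both arguments are short reductions to an ``identity approximable implies every operator approximable'' principle; yours has the advantage of being self-contained within the paper's own lemmas and of making visible why the target must be $Y$ rather than $F(Y)$, while the paper's version is marginally shorter because the heavy lifting is outsourced to the cited proposition. The one step that needs care --- and which you handle correctly --- is that $h=S\circ\delta_{X}$ lies in $Lip_{0\mathcal{F}}(X,Y)$ because its unique linearization is the finite-rank $S$, by Lemma~\ref{Lem}(4). No gaps.
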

\begin{proof}
    $(2) \implies (1)$ directly follows by considering $Y=X$ and $f=\delta_{X}$.\\
    \underline{$(1) \implies (2)$}\\ 
    Let $Y$ be a Banach space, $K$ be a compact subset of $X$, $\epsilon > 0$ and $f \in Lip_{0}(X,F(Y))$. Therefore by the \cite[Proposition 3.2]{APITLM} there exists $g \in Lip_{0 \mathcal{F}}(X,F(Y))$ such that $\sup\limits_{x \in K}\|f(x) - g(x)\| < \epsilon $.\\
    Now $\sup\limits_{x \in K}\|\beta_{Y} \circ f (x) - \beta_{Y} \circ g (x)\| \leq \sup\limits_{x \in K}\|f(x) - g(x)\| < \epsilon$ and it immediately proves this part.
\end{proof}
The following proposition will link the approximation property of $F(X)$ with the Lipschitz approximation property of the Banach space $X$.
\begin{prop}
    If the Lipschitz-free space $F(X)$ of a Banach space $X$ has approximation property, then $X$ has the Lipschitz approximation property.
\end{prop}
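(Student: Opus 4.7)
The plan is to reduce the problem to the equivalence $(1) \Leftrightarrow (2)$ established in Theorem \ref{thm3.2}. Namely, it suffices to show that the hypothesis ``$F(X)$ has the approximation property'' implies $Id_{F(X)} \in \overline{\mathcal{F}(F(X),F(X))}^{\mathcal{T}\delta_{c}}$, from which Theorem \ref{thm3.2} will deliver the Lipschitz approximation property of $X$.

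The key observation is a comparison of topologies on $L(F(X),F(X))$. The topology $\mathcal{T}\delta_{c}$ is generated by the seminorms $P_{K}(T) = \sup_{x \in K}\|T(\delta_{X}(x))\|$ ranging over compact subsets $K \subset X$, whereas the classical topology $\tau_{c}$ of uniform convergence on compact sets of $F(X)$ is generated by the seminorms $Q_{C}(T) = \sup_{\gamma \in C}\|T(\gamma)\|$ ranging over compact subsets $C \subset F(X)$. Since $\delta_{X}: X \to F(X)$ is Lipschitz (indeed $Lip(\delta_{X})=1$) and thus continuous, for any compact $K \subset X$ the image $\delta_{X}(K)$ is a compact subset of $F(X)$, and one has
\[
P_{K}(T) = \sup_{x \in K}\|T(\delta_{X}(x))\| = \sup_{\gamma \in \delta_{X}(K)}\|T(\gamma)\| = Q_{\delta_{X}(K)}(T).
\]
In particular, the topology $\mathcal{T}\delta_{c}$ is coarser than the $\tau_{c}$ topology on $L(F(X),F(X))$.

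Now assume $F(X)$ has the approximation property, so that $Id_{F(X)}$ lies in the $\tau_{c}$-closure of $\mathcal{F}(F(X),F(X))$. Fix a compact set $K \subset X$ and $\epsilon > 0$. Applying the approximation property of $F(X)$ to the compact set $\delta_{X}(K) \subset F(X)$, one obtains $S \in \mathcal{F}(F(X),F(X))$ with
\[
\sup_{x \in K}\|\delta_{X}(x) - S(\delta_{X}(x))\| = \sup_{\gamma \in \delta_{X}(K)}\|\gamma - S(\gamma)\| < \epsilon.
\]
Hence $Id_{F(X)} \in \overline{\mathcal{F}(F(X),F(X))}^{\mathcal{T}\delta_{c}}$, and by Theorem \ref{thm3.2} the space $X$ enjoys the Lipschitz approximation property.

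There is no real obstacle here; the proof is essentially a one-line topological comparison followed by an application of the characterization already established. The only subtlety worth being explicit about is the continuity of $\delta_{X}$, which guarantees that compact sets in $X$ push forward to compact sets in $F(X)$ and thereby makes $\mathcal{T}\delta_{c}$ weaker than $\tau_{c}$ on $L(F(X),F(X))$.
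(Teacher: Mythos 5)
Your proof is correct, and it rests on the same key observation as the paper's: a compact set $K\subset X$ pushes forward to the compact set $\delta_X(K)\subset F(X)$, to which the approximation property of $F(X)$ is applied. The only difference is cosmetic: you finish by invoking the equivalence $(1)\Leftrightarrow(2)$ of Theorem \ref{thm3.2}, whereas the paper concludes directly by taking the Lipschitz finite-rank map $\beta_X S\delta_X$ and using $\|\beta_X\|\le 1$.
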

\begin{proof}
     Suppose $X$ is a Banach space such that $F(X)$ has $a.p$, $K$ is a compact subset of $X$, and $\epsilon >0$. Therefore, there exists $ S \in \mathcal{F}(F(X),F(X))$ such that $ \sup\limits_{x \in K}\|Id_{F(X)}(\delta_{X}(x)) - S\delta_{X}(x)\| < \epsilon$.\\
     Now, $ \sup\limits_{x \in K}\|Id_{X}(x) - \beta_{X}S\delta_{X}(x)\| \leq  \sup\limits_{x \in K}\|Id_{F(X)}(\delta_{X}(x)) - S\delta_{X}(x)\| < \epsilon$ \\
     This implies $Id_{X} \in \overline{Lip_{0\mathcal{F}}(X,X)}^{\tau_{c}}$ and hence $X$ has $Lip-a.p$.
\end{proof}
\begin{cor}\label{cor3.5}
    If for any Banach space $X$, $X^{\#}$ has $a.p$ then $X$ has $Lip-a.p$.
\end{cor}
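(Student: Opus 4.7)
The plan is to chain together two facts we already have at hand: the duality $X^{\#} \cong F(X)^{*}$ from Lemma \ref{Lem}(1), and the proposition just proved that the approximation property of $F(X)$ forces the Lipschitz approximation property of $X$. The only missing link is the classical implication that passes the approximation property from a dual Banach space down to the space itself.

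First, I would invoke Lemma \ref{Lem}(1) to identify $X^{\#}$ with $F(X)^{*}$ isometrically via the map $Q_X$. Thus the hypothesis that $X^{\#}$ has the approximation property translates directly into the statement that $F(X)^{*}$ has the approximation property.

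Next, I would quote the classical Grothendieck result (for instance, from \cite{ITTPOBS}) that if the dual $Y^{*}$ of a Banach space $Y$ has the approximation property, then $Y$ itself has the approximation property. Applying this with $Y = F(X)$, we conclude that $F(X)$ has the approximation property.

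Finally, the previous proposition (the one stating that approximation property of $F(X)$ implies Lipschitz approximation property of $X$) immediately yields that $X$ has Lip-a.p, completing the proof. There is no real obstacle here: the whole argument is a one-line chain once the isometric identification $X^{\#}\cong F(X)^{*}$ and the classical dual-to-predual transfer of the a.p are invoked. The only subtle point to mention is to be explicit that the classical transfer of the approximation property from the dual to the space is valid without any reflexivity or further hypothesis on $F(X)$, which is exactly the content of Grothendieck's theorem.
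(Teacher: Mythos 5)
Your argument is correct and is essentially identical to the paper's own proof: identify $X^{\#}$ with $F(X)^{*}$ via Lemma \ref{Lem}(1), use the classical fact that a Banach space whose dual has the approximation property itself has the approximation property to pass to $F(X)$, and then apply the preceding proposition to conclude that $X$ has the Lipschitz approximation property. Your explicit citation of Grothendieck's dual-to-predual transfer is a welcome clarification of a step the paper leaves implicit.
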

\begin{proof}
Let $X$ be a Banach space such that $X^{\#}$ has $a.p$. Therefore, $F(X)^{*}$ has $a.p$, and this gives that $F(X)$ has $a.p$, as $F(X)$ is a Banach space, and hence $X$ has $Lip-a.p$ (by the previous proposition).
\end{proof}
\begin{prop} \label{prop2.7}
 The following implications hold:
\begin{center}
\begin{tikzcd}
X \ has\ a.p \arrow[r,Rightarrow] \arrow[d,Rightarrow] & \beta_{X} \in \overline{\mathcal{F}(F(X),X)}^{\tau_{c}} \arrow[d,Rightarrow] \\
X \ has \ Lip-a.p \arrow[r,Leftrightarrow] \arrow[d,Leftrightarrow] & \beta_{X} \in \overline{\mathcal{F}(F(X),X)}^{\tau \delta_{c}}\\
\beta_{X} \in \overline{Lip_{0\mathcal{F}}(F(X),X)}^{\tau_{c}}
\end{tikzcd} 
\end{center}
\end{prop}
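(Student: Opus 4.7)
The plan is to prove each of the five arrows using two ingredients from Lemma \ref{Lem}: the factorization $\beta_X \circ \delta_X = Id_X$, and the linearization bijection $f \longleftrightarrow T_f$ between $Lip_{0\mathcal{F}}(X, Y)$ and $\mathcal{F}(F(X), Y)$ satisfying $T_f \circ \delta_X = f$. Since $\delta_X$ is continuous, for each compact $K \subset X$ the image $\delta_X(K)$ is compact in $F(X)$ and the defining seminorm $P_K$ of $\tau\delta_c$ coincides with the $\tau_c$-seminorm at $\delta_X(K)$; hence $\tau\delta_c$ is weaker than $\tau_c$ on $L(F(X), X)$, which is all that is needed for the right vertical arrow. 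The left vertical arrow (a.p $\Rightarrow$ Lip-a.p) is immediate from $\mathcal{F}(X, X) \subset Lip_{0\mathcal{F}}(X, X)$. For the top horizontal arrow, given compact $K' \subset F(X)$ and $\epsilon > 0$, I would observe that $\beta_X(K') \subset X$ is compact and apply a.p of $X$ there to obtain $T \in \mathcal{F}(X, X)$ within $\epsilon$ of $Id_X$; then $T \circ \beta_X \in \mathcal{F}(F(X), X)$ approximates $\beta_X$ uniformly on $K'$.

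The middle equivalence is where linearization is essential. For $f \in Lip_{0\mathcal{F}}(X,X)$ with linearization $T_f \in \mathcal{F}(F(X), X)$ and any compact $K \subset X$, the identity
\[ \sup_{x \in K}\|x - f(x)\| = \sup_{x \in K}\|(\beta_X - T_f)(\delta_X(x))\| = P_K(\beta_X - T_f) \]
combined with the bijection $f \longleftrightarrow T_f$ from Lemma \ref{Lem}(4) translates approximation of $Id_X$ in $\tau_c$ by Lipschitz finite-rank operators into approximation of $\beta_X$ in $\tau\delta_c$ by linear finite-rank operators, yielding both directions at once.

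For the bottom equivalence I would transfer compact sets between $X$ and $F(X)$ via $\beta_X$ and $\delta_X$. For $(\Rightarrow)$: given compact $K' \subset F(X)$, apply Lip-a.p to the compact set $\beta_X(K') \subset X$ to produce $g \in Lip_{0\mathcal{F}}(X, X)$ within $\epsilon$ of $Id_X$ on $\beta_X(K')$; then $g \circ \beta_X \in Lip_{0\mathcal{F}}(F(X), X)$ is within $\epsilon$ of $\beta_X$ on $K'$. For $(\Leftarrow)$: given compact $K \subset X$, the set $\delta_X(K) \subset F(X)$ is compact; approximate $\beta_X$ there by $h \in Lip_{0\mathcal{F}}(F(X), X)$, and $h \circ \delta_X \in Lip_{0\mathcal{F}}(X, X)$ then approximates $Id_X$ on $K$ since $\beta_X \circ \delta_X = Id_X$.

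No step looks genuinely difficult: everything reduces to pre- or post-composing with $\delta_X$ or $\beta_X$ and invoking $\beta_X \circ \delta_X = Id_X$. The main obstacle is bookkeeping, namely distinguishing linear finite-rank operators $\mathcal{F}$ from Lipschitz finite-rank operators $Lip_{0\mathcal{F}}$ at each arrow, and matching the appropriate compact set (in $X$ versus in $F(X)$) with the appropriate topology ($\tau_c$ versus $\tau\delta_c$).
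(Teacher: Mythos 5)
Your proposal is correct and follows essentially the same route as the paper: the top and bottom arrows by pre-composing an approximant of $Id_X$ with $\beta_X$ on the compact set $\beta_X(K')$, the middle equivalence via the identity $Id_X - f = (\beta_X - T_f)\circ\delta_X$ together with the bijection $f \leftrightarrow T_f$, and the remaining arrows by inclusion of operator classes and comparison of the topologies $\tau_c$ and $\tau\delta_c$.
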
 
\begin{proof}
    From the definitions $X \ has\ a.p \implies X \ has \ Lip-a.p$ part follows.\\
    \underline{Proof of $X \ has\ a.p \implies \beta_{X} \in \overline{\mathcal{F}(F(X),X)}^{\tau_{c}}$}:\\
    Let $A$ be a compact set in $F(X)$ and $\epsilon > 0$, therefore $\beta_{X}(A)$ is compact subset of $X$. Since $X \ has\ a.p$ there exists $S \in F(X,X)$ such that $\sup\limits_{\gamma \in A}\|Id_{X}(\beta_{X}(\gamma)) - S\beta_{X}(\gamma)\| < \epsilon$. This implies that $\beta_{X} \in \overline{\mathcal{F}(F(X),X)}^{\tau_{c}}.$\\
    \\
    \underline{To prove : $X \ has \ Lip-a.p \Longleftrightarrow \beta_{X} \in \overline{\mathcal{F}(F(X),X)}^{\tau \delta_{c}}$}:\\
    Suppose $X \ has \ Lip-a.p$, $K$ be a compact subset of $X$, and $\epsilon >0$. Then there exists $S \in Lip_{o \mathcal{F}}(X, X)$ such that $\sup\limits_{x \in K}\|Id_{X}(x)- S(x)\| < \epsilon$. Now $Id_{X}-S = (\beta_{X} - T_{S}) \circ \delta_{X}$, where $T_{s}$ is the linearization of $S$. Hence $\sup\limits_{x \in K}\|\beta_{X}\big(\delta_{X}(x)\big)- T_{S}\big(\delta_{X}(x)\big)\| =\sup\limits_{x \in K}\|Id_{X}(x)- S(x)\| < \epsilon$ and this gives us the forward implication part.\\
    For the converse let us consider $\beta_{X} \in \overline{\mathcal{F}(F(X),X)}^{\tau \delta_{c}}$, $K$ be a compact subset of $X$, and $\epsilon >0$. Then there exists $S \in \mathcal{F}(F(X), X)$ such that $\sup\limits_{x \in K}\|\beta_{X}\big(\delta_{X}(x)\big)- S\big(\delta_{X}(x)\big)\| < \epsilon$. This immediately provides $X \ has \ Lip-a.p$.\\
    \\
    \underline{To prove : $X \ has \ Lip-a.p \Longleftrightarrow \beta_{X} \in \overline{Lip_{0\mathcal{F}}(F(X),X)}^{\tau_{c}}$}:\\
    Let $A$ be a compact set in $F(X)$ and $\epsilon > 0$, therefore $\beta_{X}(A)$ is compact subset of $X$. Since $X \ has\ Lip-a.p$ there exists $S \in Lip_{0\mathcal{F}}(X,X)$ such that $\sup\limits_{\gamma \in A}\|Id_{X}(\beta_{X}(\gamma)) - S\beta_{X}(\gamma)\| < \epsilon$. This implies that $\beta_{X} \in \overline{Lip_{0\mathcal{F}}(F(X),X)}^{\tau_{c}},$ as $S \circ \beta_{X} \in Lip_{0\mathcal{F}}(F(X),X).$\\
    Conversely, let $\beta_{X} \in \overline{Lip_{0\mathcal{F}}(F(X),X)}^{\tau_{c}}$, $K$ be a compact subset of $X$, and $\epsilon >0$. Then there exists $S \in Lip_{0\mathcal{F}}(F(X),X)$ such that $\sup\limits_{x \in K}\|Id_{X}(x)- S\circ \delta_{X}(x)\| = \sup\limits_{x \in K}\|\beta_{X}\big(\delta_{X}(x)\big)- S\big(\delta_{X}(x)\big)\| < \epsilon$. Hence $X \ has \ Lip-a.p$.\\ Comparing the two topologies $\tau_{\delta_{c}}$ and $\tau_{c}$ we get $\beta_{X} \in \overline{\mathcal{F}(F(X),X)}^{\tau_{c}} \implies \beta_{X} \in \overline{\mathcal{F}(F(X),X)}^{\tau_{\delta_{c}}}$. This completes the proof.
\end{proof}
\begin{rem}
    In a nutshell, from this section, we get the following:
\begin{center}
\begin{tikzpicture}
\matrix (m) 
[matrix of math nodes,row sep=3em,column sep=3em,minimum width=1em] 
{
X^{\#}\ has \ a.p & X \ has \ Lip-a.p & X \ has \ a.p\\
          & F(X) \ has \ a.p \\ 
}; 
\path[-stealth] 
(m-1-1) edge [double] node [above] {} (m-1-2)
        edge [double] node [below] {} (m-2-2)
(m-2-2) edge [double] node [right] {} (m-1-2)
(m-1-3) edge [double] node [left] {} (m-1-2);
\end{tikzpicture}
\end{center}
\end{rem}
Next, we see some more equivalent characterization of the Lipschitz approximation
property of the Banach space $X$.
\begin{prop}
    The following are equivalent:
    \begin{enumerate}
        \item $Id_{X} \in \overline{Lip_{0\mathcal{F}}(X,X)}^{\tau_{c}}$.
        \item $\delta_{X} \in \overline{Lip_{0\mathcal{F}}(X,F(X))}^{\tau_{c}}$.
        \item $\delta_{X}\beta_{X} \in \overline{Lip_{0\mathcal{F}}(F(X),F(X))}^{\tau_{c}}$
    \end{enumerate}
\end{prop}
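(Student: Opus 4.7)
The plan is to derive $(1)\Longleftrightarrow(2)$ directly from Theorem \ref{thm3.2}, and to establish $(2)\Longleftrightarrow(3)$ by composing with the canonical maps $\delta_X$ and $\beta_X$. Throughout, I will exploit the fact (Lemma \ref{Lem}(4)) that a Lipschitz map $f:X\to F(X)$ lies in $Lip_{0\mathcal{F}}(X,F(X))$ if and only if its linearization $T_f:F(X)\to F(X)$ is a linear finite-rank operator, together with the observation that "Lipschitz finite-rank" is equivalent to "Lipschitz with finite-dimensional range."

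For $(1)\Longleftrightarrow(2)$, I would note that any $f\in Lip_{0\mathcal{F}}(X,F(X))$ is of the form $f=S\circ\delta_X$ for a unique $S\in\mathcal{F}(F(X),F(X))$, and conversely every such $S$ gives rise to such an $f$. The pointwise identity $\|\delta_X(x)-f(x)\|=\|\delta_x-S\delta_x\|$ then shows that condition $(2)$ is nothing but the assertion $Id_{F(X)}\in\overline{\mathcal{F}(F(X),F(X))}^{\mathcal{T}\delta_c}$. Theorem \ref{thm3.2}$(1)\Longleftrightarrow(2)$ now identifies this with $Lip$-a.p, i.e.\ with $(1)$.

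For $(2)\Longleftrightarrow(3)$, the key observation is that $\beta_X:F(X)\to X$ is a $1$-Lipschitz surjection satisfying $\beta_X\circ\delta_X=Id_X$, so compact sets transfer both ways: $\beta_X$ sends compacts in $F(X)$ to compacts in $X$, and $\delta_X$ does the reverse. In the direction $(2)\Longrightarrow(3)$, given a compact $K'\subset F(X)$ and $\epsilon>0$, I would apply $(2)$ on the compact set $\beta_X(K')\subset X$ to produce $f\in Lip_{0\mathcal{F}}(X,F(X))$ with $\sup_{x\in\beta_X(K')}\|\delta_X(x)-f(x)\|<\epsilon$. The composition $f\circ\beta_X$ is then Lipschitz with the same finite-dimensional range as $f$ (using surjectivity of $\beta_X$), so it belongs to $Lip_{0\mathcal{F}}(F(X),F(X))$, and
\[
\sup_{\gamma\in K'}\|\delta_X\beta_X(\gamma)-f\beta_X(\gamma)\|=\sup_{x\in\beta_X(K')}\|\delta_X(x)-f(x)\|<\epsilon.
\]
Conversely, for $(3)\Longrightarrow(2)$, I would apply $(3)$ to the compact set $\delta_X(K)\subset F(X)$ to obtain $g\in Lip_{0\mathcal{F}}(F(X),F(X))$; the identity $\delta_X\beta_X(\delta_x)=\delta_X(x)$ (from $\beta_X\circ\delta_X=Id_X$) then ensures that $g\circ\delta_X\in Lip_{0\mathcal{F}}(X,F(X))$ approximates $\delta_X$ on $K$ within $\epsilon$.

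No step presents a serious obstacle: the argument is essentially a routine transport of approximations between $X$ and $F(X)$ via the factorization $Id_X=\beta_X\circ\delta_X$, together with the tensor-product characterization of $Lip$-a.p already established in Theorem \ref{thm3.2}. The only minor subtlety worth flagging is verifying that $f\circ\beta_X$ (respectively $g\circ\delta_X$) inherits the Lipschitz finite-rank property; this is immediate once one uses the "finite-dimensional range" description rather than the "finite-rank linearization" description.
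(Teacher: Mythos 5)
Your proposal is correct and follows essentially the same route as the paper: both derive $(1)\Longleftrightarrow(2)$ from Theorem \ref{thm3.2} via the correspondence $f\leftrightarrow T_f$ between $Lip_{0\mathcal{F}}(X,F(X))$ and $\mathcal{F}(F(X),F(X))$, and both handle the third condition by transporting approximations through $\delta_X$ and $\beta_X$. The only cosmetic difference is that you prove $(2)\Longleftrightarrow(3)$ directly, while the paper routes it through Proposition \ref{prop2.7}; the underlying computation is the same, and your check that $f\circ\beta_X$ and $g\circ\delta_X$ remain Lipschitz finite-rank (via the finite-dimensional-range description) is the right way to close that step.
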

\begin{proof}
    Let $(1)$ holds, $K$ be a compact subset of $X$ and $\epsilon > 0$. Therefore by theorem \ref{thm3.2} $ Id_{F(X)} \in \overline{\mathcal{F}(F(X),F(X))}^{\mathcal{T}\delta_{c}}$ and there exists $S \in \mathcal{F}(F(X),F(X))$ such that $\sup\limits_{x \in K}\|Id_{F(X)}(\delta_{X}(x))- S\delta_{X}(x)\| < \epsilon$. Since $S \circ \delta_{X} \in Lip_{0\mathcal{F}}(X,F(X))$ this gives $\delta_{X} \in \overline{Lip_{0\mathcal{F}}(X,F(X))}^{\tau_{c}}$. Now suppose $\delta_{X} \in \overline{Lip_{0\mathcal{F}}(X,F(X))}^{\tau_{c}}$, $K$ be a compact subset of $X$ and $\epsilon > 0$. Then there exists $S \in Lip_{0\mathcal{F}}(X,F(X))$ such that $\sup\limits_{x \in K}\|\delta_{X}(x))- S\delta_{X}(x)\| < \epsilon$. Now $\sup\limits_{x \in K}\|Id_{X}(x)- \beta_{X}S(x)\|\leq \sup\limits_{x \in K}\|\delta_{X}(x))- S\delta_{X}(x)\| < \epsilon$. As $\beta_{X} \circ S \in Lip_{0\mathcal{F}}(X,X)$, $(1)$ follows.\\
    Equivalency of $(1)$ and $(3)$ follows directly from $(2)$ and the Proposition \ref{prop2.7}.
\end{proof}
\section{\bf Lipschitz p-approximation property of a Banach space} \label{LPAPOBS}
The definitions of special types of approximation
properties that were studied later (e.g., $p$-approximation properties studied
by Saphar~\cite{HDAPDLEDBEADPA} for $1 \leq p < \infty$ and by Re\v{n}ov~\cite{DOTEITSOPNO} for $0 < p < \infty$)
were always conceived via the tensor product route. Later on, in \cite{COWAFTSLP}, Karn and Sinha examine the approximation of the identity operator
on $p$-compact sets by finite-rank operators, namely the $p$-approximation
property. Motivated by their definition, we formulated Question \ref{qns2} earlier,
In this section, we introduce the Lipschitz $p$-approximation property,
provide several characterizations of it, and establish a connection between
the linear $p$-approximation property of the Lipschitz free space $F(X)$
and the Lipschitz $p$-approximation property of $X$.

\begin{defn}
    A Banach space $X$ is said to have Lipschitz $p-$approximation property in short $Lip-p.a.p$ if $Id_{X} \in \overline{Lip_{0\mathcal{F}}(X,X)}^{\tau_{p}}$. 
\end{defn}
\begin{note}
    From the definition, it is clear that if a Banach space $X$ has linear $p.a.p$, then it has $Lip-p.a.p$, and by comparing the topologies, we have that if $X$ has $Lip-a.p$, then $ X$ has $Lip-p.a.p$.
\end{note}
The first two theorems in this section characterize the Lipschitz $p$-approximation property of $X$. The proof of the theorem is similar to that of \cite[Theorem~3.1]{APITLM} with the following modifications.
\begin{thm}\label{thm0.2}
    A Banach space $X$ has the Lipschitz $p$-approximation property iff $Id_{F(X)} \in \overline{\mathcal{F}(F(X),F(X))}^{\mathcal{T}\delta_{p}}$.
\end{thm}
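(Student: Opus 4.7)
The plan is to adapt \cite[Theorem~3.1]{APITLM} by replacing ``compact set'' with ``$p$-compact set'' throughout; the structural skeleton of that proof survives this weakening because $\tau_p$ is coarser than $\tau_c$ and Lipschitz maps (in particular $\delta_X$ and any $g\in Lip_{0\mathcal{F}}(X,X)$) still send $p$-compact, hence compact, sets to compact sets at each place where this is needed.

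For the sufficiency direction I would argue exactly as in one half of Theorem \ref{thm3.2}. Assume $Id_{F(X)}\in\overline{\mathcal{F}(F(X),F(X))}^{\mathcal{T}\delta_{p}}$; given a $p$-compact $K\subset X$ and $\epsilon>0$, pick $S\in\mathcal{F}(F(X),F(X))$ with $\sup_{x\in K}\|\delta_X(x)-S\delta_X(x)\|<\epsilon$ and set $g=\beta_X\circ S\circ\delta_X$. Since $S$ is linear of finite rank, $g(X)\subset\beta_X(S(F(X)))$ lies in a finite-dim subspace of $X$, so $g\in Lip_{0\mathcal{F}}(X,X)$; Lemma \ref{Lem}(3) together with $\|\beta_X\|=1$ then gives $\sup_{x\in K}\|x-g(x)\|<\epsilon$, which is the Lipschitz $p$-approximation property.

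The necessity direction is the hard one. Assume $X$ has Lip-$p$.a.p, fix a $p$-compact $K\subset X$ and $\epsilon>0$, and pick $g\in Lip_{0\mathcal{F}}(X,X)$ with $\sup_{x\in K}\|x-g(x)\|<\epsilon/2$. Let $E$ be a finite-dim subspace of $X$ containing $g(X)$, and let $\hat{g}:F(X)\to F(X)$ be the linearization from Lemma \ref{Lem}(2), so that $\hat{g}\circ\delta_X=\delta_X\circ g$ and, by continuity, $\hat{g}(F(X))\subset F(E)$ viewed as a closed subspace of $F(X)$. Since $g(K)$ is compact in the finite-dim space $E$, the set $\hat{g}(\delta_X(K))=\delta_X(g(K))$ is compact in $F(E)$; invoking the approximation property of $F(E)$ (a standard fact for free spaces of finite-dim Banach spaces), choose a linear finite-rank $R:F(E)\to F(E)$ with $\sup_{x\in K}\|\hat{g}\delta_X(x)-R\hat{g}\delta_X(x)\|<\epsilon/2$. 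Setting $S:=R\circ\hat{g}$ (composed with the inclusion $F(E)\hookrightarrow F(X)$), $S\in\mathcal{F}(F(X),F(X))$, and the triangle inequality
$$\|\delta_X(x)-S\delta_X(x)\|\le\|\delta_X(x)-\hat{g}\delta_X(x)\|+\|\hat{g}\delta_X(x)-R\hat{g}\delta_X(x)\|=\|x-g(x)\|+\|\hat{g}\delta_X(x)-R\hat{g}\delta_X(x)\|<\epsilon$$
delivers $Id_{F(X)}\in\overline{\mathcal{F}(F(X),F(X))}^{\mathcal{T}\delta_{p}}$.

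The main obstacle lies precisely in this necessity argument: the Lipschitz finite-rank map $g$ on $X$ does not itself yield a linear finite-rank map on $F(X)$, because its linearization $\hat{g}$ is linear but typically not of finite rank. The argument therefore has to route through the approximation property of the Lipschitz-free space $F(E)$ of the finite-dim range of $g$, which, though standard, is nontrivial. The switch from $\tau_c$ to $\tau_p$ is harmless in itself: every $p$-compact set is compact, so the compactness of $g(K)\subset E$ and of $\delta_X(g(K))\subset F(E)$ is automatic, and no further $p$-compact-specific feature of the data is needed.
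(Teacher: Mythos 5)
Your proposal is correct and follows essentially the same route as the paper's proof: the sufficiency direction is the identical $\beta_X\circ S\circ\delta_X$ composition, and the necessity direction passes, exactly as the paper does, through the finite-dimensional span $E$ of the range of the Lipschitz finite-rank approximant, the linearization $\hat{g}$, and the (bounded) approximation property of $F(E)$ from Godefroy--Kalton, with the paper merely being more explicit about the isometric identification $\hat{\mathfrak{i}}:F(E)\to F(X)$ and its inverse that you handle via the inclusion. No gaps.
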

\begin{proof}
    Let $X$ has Lipschitz $p$-approximation property, $K$ be a $p$-compact subset of $X$ and $\epsilon > 0$. Then there exists $f \in Lip_{0\mathcal{F}}(X,X)$ such that $$\sup\limits_{x \in K} \left\|f(x)-x\right\|\leq \frac{\epsilon}{2}.$$
    Also corresponding to $f \in Lip_{0\mathcal{F}}(X,X)$ by Lemma \ref{Lem}(2) there exists $\hat{f} \in L(F(X),F(X))$ such that $\hat{f} \circ \delta_X = \delta_X \circ f$. Put $E:= span(f(X))$. Then $dim(E)< \infty$, as $f \in Lip_{0\mathcal{F}}(X,X)$.

    Let us now consider the inclusion map $\mathfrak{i} : E \xrightarrow{} X$. Thus there exists an isometry $\hat{\mathfrak{i}} \in L(F(E), F(X))$ such that $\hat{\mathfrak{i}} \circ \delta_E = \delta_X \circ \mathfrak{i}$. Since for any $x \in X$,
    $$\hat{f}(\delta_x)= \delta_X(f(x))=\delta_X(\mathfrak{i}(f(x)))= \hat{\mathfrak{i}}\delta_E(f(x)) \in \hat{\mathfrak{i}}(F(E)),$$ by linearity and continuity we have $\hat{f}(F(X)) \subset \hat{\mathfrak{i}}(F(E))$. Now $K$ is a $p$-compact subset of $X$. Therefore by \cite[Definition 2.1]{COWAFTSLP} $K$ is compact in $X$ and hence $\delta_E \circ f(K)$ is also compact.

    Since $E$ is finite dimensional, by \cite[Proposition 5.1]{LFBS} $F(E)$ has $1$-bounded approximation property. Therefore there exists $S \in \mathcal{F}((F(E)),(F(E)))$ with $\|S\| \leq 1$ such that 
    $$\sup\limits_{x \in K} \left\|S\delta_E(f(x)) -\delta_E(f(x))\right\|_{F(E)} \leq \frac{\epsilon}{2}.$$
    Now we consider the following composition operator, which will approximate $Id_{F(X)}$:
    $$F(X)\xrightarrow{\hat{f}} \hat{\mathfrak{i}}(F(E)) \xrightarrow{\hat{\mathfrak{i}}^{-1}} F(E) \xrightarrow{S} F(E) \xrightarrow{\hat{\mathfrak{i}}} F(X).$$
    Further 
    \begin{eqnarray*}
       & \sup\limits_{x \in K} \left\|\hat{\mathfrak{i}}S\hat{\mathfrak{i}}^{-1}\hat{f}\delta_X(x) - \delta_X(x)\right\|_{F(X)}&\\
        &\leq& \sup\limits_{x \in K} \left\|\hat{\mathfrak{i}}S\hat{\mathfrak{i}}^{-1}\hat{f}\delta_X(x) - \hat{\mathfrak{i}}\delta_E(f(x))\right\|_{F(X)}+ \sup\limits_{x \in K} \left\|\hat{\mathfrak{i}}\delta_E(f(x)) - \delta_X(x)\right\|_{F(X)}\\
        &\leq& \sup\limits_{x \in K} \left\|S\hat{\mathfrak{i}}^{-1}\hat{f}\delta_X(x) - \delta_E(f(x))\right\|_{F(E)}+ \sup\limits_{x \in K} \left\|\delta_X(f(x)) - \delta_X(x)\right\|_{F(X)}\\
        &=& \sup\limits_{x \in K} \left\|S\delta_E(f(x)) - \delta_E(f(x))\right\|_{F(E)}+ \sup\limits_{x \in K} \left\|\delta_X(f(x)) - \delta_X(x)\right\|_{F(X)}\\
        &=& \sup\limits_{x \in K} \left\|S\delta_E(f(x)) - \delta_E(f(x))\right\|_{F(E)}+ \sup\limits_{x \in K} \left\|f(x) - x\right\|_{F(X)}\\
        &\leq& \epsilon \ ,
    \end{eqnarray*}
    shows that $Id_{F(X)} \in \overline{\mathcal{F}(F(X),F(X))}^{\mathcal{T}\delta_{p}}$. 

    Conversely, let $Id_{F(X)} \in \overline{\mathcal{F}(F(X),F(X))}^{\mathcal{T}\delta_{p}}$, $K$ be a $p$-compact subset of $X$ and $\epsilon >0$. 
    Since $\left\{\beta_X S \delta_X : S \in \mathcal{F}(F(X),F(X))\right\} \subset Lip_{0\mathcal{F}}(X, X)$, $$\overline{\left\{\beta_X S \delta_X : S \in \mathcal{F}(F(X),F(X))\right\}}^{\tau_p} \subset \overline{Lip_{0\mathcal{F}}(X, X)}^{\tau_p}.$$
    We show that $Id_X \in \overline{\left\{\beta_X S \delta_X : S \in \mathcal{F}(F(X),F(X))\right\}}^{\tau_p}$.
    Now $Id_{F(X)} \in \overline{\mathcal{F}(F(X),F(X))}^{\mathcal{T}\delta_{p}}$ implies 
    $$\sup\limits_{x \in K}\left\|\beta_X \delta_X(x)-\beta_XS\delta_X(x)\right\|=\sup\limits_{x \in K}\left\|\beta_X (\delta_X(x)-S\delta_X(x))\right\| \leq \sup\limits_{x \in K}\left\|\delta_X(x)-S\delta_X(x)\right\| < \epsilon.$$
    Hence the proof follows.
\end{proof}
\begin{thm}
    A Banach space $X$ has Lipschitz $p$-approximation property if and only if $\beta_{X} \in \overline{\mathcal{F}(F(X),X)}^{\mathcal{T}\delta_{p}}$.
\end{thm}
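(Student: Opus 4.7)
My plan is to deduce this equivalence directly from the preceding Theorem \ref{thm0.2}, using the universal properties of $\delta_X$ and $\beta_X$ recorded in Lemma \ref{Lem}. The key observations are that $\beta_X \circ \delta_X = Id_X$, that $\|\beta_X\| \leq 1$ since $\beta_X$ is a quotient map, and that pre-composition with $\delta_X$ carries $\mathcal{F}(F(X),X)$ into $Lip_{0\mathcal{F}}(X,X)$.

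For the forward direction, I would assume $X$ has $Lip$-$p.a.p$. By Theorem \ref{thm0.2}, for any $p$-compact subset $K \subset X$ and $\epsilon > 0$ there is $S \in \mathcal{F}(F(X),F(X))$ with $\sup_{x \in K}\|\delta_X(x) - S\delta_X(x)\| < \epsilon$. Then $\beta_X \circ S \in \mathcal{F}(F(X),X)$, and using $\beta_X \circ \delta_X = Id_X$ together with $\|\beta_X\| \leq 1$, I get
$$\sup_{x \in K}\|\beta_X\delta_X(x) - (\beta_X \circ S)\delta_X(x)\| \leq \sup_{x \in K}\|\delta_X(x) - S\delta_X(x)\| < \epsilon,$$
which shows $\beta_X \in \overline{\mathcal{F}(F(X),X)}^{\mathcal{T}\delta_p}$.

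For the converse, I would assume $\beta_X \in \overline{\mathcal{F}(F(X),X)}^{\mathcal{T}\delta_p}$. Given a $p$-compact $K \subset X$ and $\epsilon > 0$, pick $T \in \mathcal{F}(F(X),X)$ with $\sup_{x \in K}\|\beta_X\delta_X(x) - T\delta_X(x)\| < \epsilon$. Since $\beta_X \circ \delta_X = Id_X$, this reads $\sup_{x \in K}\|x - T\delta_X(x)\| < \epsilon$. Now $T \circ \delta_X \in Lip_0(X,X)$ has finite-dimensional image (contained in $T(F(X))$), so $T \circ \delta_X \in Lip_{0\mathcal{F}}(X,X)$, which yields $Id_X \in \overline{Lip_{0\mathcal{F}}(X,X)}^{\tau_p}$, i.e., $X$ has $Lip$-$p.a.p$.

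There is no real obstacle here: the whole argument is a bookkeeping exercise that uses $\beta_X \circ \delta_X = Id_X$ in two directions and the fact that $\beta_X$ and $\delta_X$ respect the relevant classes of operators. The only minor point to double-check is that one may legitimately pre- or post-compose with $\beta_X$ without losing the finite-rank property and that the seminorm estimate transfers cleanly between the $\mathcal{T}\delta_p$ topology on $L(F(X),F(X))$ and on $L(F(X),X)$; both are straightforward from the definitions.
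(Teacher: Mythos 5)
Your proof is correct, and the converse direction coincides with the paper's argument verbatim. The only difference is in the forward direction: you route through Theorem \ref{thm0.2} to obtain $S \in \mathcal{F}(F(X),F(X))$ approximating $Id_{F(X)}$ and then post-compose with $\beta_X$, whereas the paper argues more directly from the definition of the Lipschitz $p$-approximation property -- it takes the finite-rank Lipschitz approximant $f \in Lip_{0\mathcal{F}}(X,X)$ of $Id_X$ and uses its linearization $T_f$, which lies in $\mathcal{F}(F(X),X)$ by Lemma \ref{Lem}(4), so that $\sup_{x\in K}\|\beta_X\delta_X(x)-T_f\delta_X(x)\| = \sup_{x\in K}\|x-f(x)\| < \epsilon$. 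Both arguments are sound; the paper's version is slightly more economical since it avoids invoking Theorem \ref{thm0.2} (whose proof relies on the bounded approximation property of free spaces over finite-dimensional spaces), while yours makes the parallel with the compact-topology case of Proposition \ref{prop2.7} more transparent.
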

\begin{proof}
      Let $K$ be a $p$-compact subset of a Banach space $X$ and $\epsilon>0$. Since $X$ has $Lip-p.a.p$, there exists $ f \in Lip_{0\mathcal{F}}(X,X)$ such that 
        $\sup\limits_{x \in K}\|Id_{X}(x) - f(x)\| < \epsilon$.  Therefore  $\sup\limits_{x \in K}\|Id_{X}(x) - f(x)\| = \sup\limits_{x \in K}\|\beta_{X}\delta_{X}(x) - T_{f}\delta_{X}(x)\| < \epsilon$, where $T_f$ be the linearization of $f$.
    Hence, the forward implication follows.
    
    Conversely, let $\beta_{X} \in \overline{\mathcal{F}(F(X),X)}^{\mathcal{T}\delta_{p}}$, $K$ be a $p-$compact subset of $X$ and $\epsilon>0$.
    Then there exists $T \in \mathcal{F}(F(X),X)$ such that $ \sup\limits_{x \in K}\|\beta_{X}\delta_{X}(x) - T\delta_{X}(x)\| < \epsilon$. Thus \  $\sup\limits_{x \in K}\|Id_{X}(x) - T\delta_{X}(x)\| < \epsilon$. Since $T\circ \delta_X \in Lip_{0\mathcal{F}}(X, X)$, $X$ has $Lip-p.a.p$.
\end{proof}
 The Lipschitz $p$-approximation property will be characterized in the paragraphs that follow. This characterization might be seen as an analog of a well-known result on the $p$-approximation property due to Karn and Sinha (see \cite[Proposition~6.2]{COWAFTSLP}).
   \begin{prop}
    The following are equivalent for a Banach space $X$:
    \begin{enumerate}
        \item $X$ has the Lipschitz $p$-approximation property.
        \item For all Banach spaces $Y$, $Lip_{0}(X,Y) = \overline{Lip_{0\mathcal{F}}(X,Y)}^{\tau_{p}}$.
    \end{enumerate}
\end{prop}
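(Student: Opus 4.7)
The proof plan is built around the linearization trick together with the characterization established in Theorem \ref{thm0.2}. The implication $(2) \Rightarrow (1)$ is the easy direction: specialize $Y = X$ and $f = Id_X$, which lies in $Lip_0(X,X)$, and the hypothesis immediately gives $Id_X \in \overline{Lip_{0\mathcal{F}}(X,X)}^{\tau_p}$, i.e., $X$ has $Lip-p.a.p$.

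For $(1) \Rightarrow (2)$, fix a Banach space $Y$, a Lipschitz operator $f \in Lip_0(X,Y)$ with linearization $T_f \in L(F(X), Y)$, a $p$-compact set $K \subset X$, and $\epsilon > 0$. Since the inclusion $Lip_{0\mathcal{F}}(X,Y) \subset Lip_0(X,Y)$ and its $\tau_p$-closure always holds, only the reverse containment is at issue. I would assume $T_f \neq 0$ (else $f = 0$ and there is nothing to prove) and invoke Theorem \ref{thm0.2}: the $Lip-p.a.p$ of $X$ yields $Id_{F(X)} \in \overline{\mathcal{F}(F(X), F(X))}^{\mathcal{T}\delta_p}$, so one can pick $S \in \mathcal{F}(F(X), F(X))$ with
\begin{equation*}
\sup_{x \in K} \|\delta_X(x) - S\delta_X(x)\| < \frac{\epsilon}{\|T_f\|}.
\end{equation*}

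Now set $g := T_f \circ S \circ \delta_X : X \to Y$. Since $S$ is linear finite-rank and $T_f$ is linear, the composition $T_f \circ S$ is a linear finite-rank operator in $\mathcal{F}(F(X), Y)$, and it is precisely the linearization of $g$; by Lemma \ref{Lem}(4), $g \in Lip_{0\mathcal{F}}(X,Y)$. Using $f(x) = T_f(\delta_X(x))$, one obtains
\begin{equation*}
\sup_{x \in K} \|f(x) - g(x)\| \leq \|T_f\| \sup_{x \in K} \|\delta_X(x) - S\delta_X(x)\| < \epsilon,
\end{equation*}
which shows $f \in \overline{Lip_{0\mathcal{F}}(X,Y)}^{\tau_p}$.

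The main obstacle here is conceptual rather than technical. The classical route would be to approximate $Id_X$ by $g_0 \in Lip_{0\mathcal{F}}(X,X)$ on $K$ and then compose with $f$; but $f$ is only Lipschitz, so $f \circ g_0$ need not have finite-dimensional range (nonlinear images of finite-dimensional subspaces are not finite-dimensional), hence $f \circ g_0 \notin Lip_{0\mathcal{F}}(X,Y)$ in general. The remedy is to perform the approximation linearly at the level of $F(X)$ via Theorem \ref{thm0.2} and only then apply $T_f$, which preserves finite-rank status because $T_f$ is linear. No further estimates are needed beyond the straightforward norm bound above.
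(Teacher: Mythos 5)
Your proof is correct and follows essentially the same route as the paper: both directions use Theorem \ref{thm0.2} to obtain a finite-rank $S$ on $F(X)$ approximating $\delta_X$ on $K$, then compose with the linearization $T_f$ to get the finite-rank Lipschitz approximant $T_f \circ S \circ \delta_X$. The only cosmetic differences are that you divide $\epsilon$ by $\|T_f\|$ up front (handling $T_f = 0$ separately) whereas the paper accepts a final bound of $\epsilon\|T_f\|$.
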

\begin{proof}
    Let $X$ have $Lip-p.a.p$, $Y$ be a Banach space, $f \in Lip_{0}(X,Y)$, $K$ be a $p$-compact subset of $X$ and $\epsilon>0$. \\
    Therefore by Theorem \ref{thm0.2} there exists $S \in \mathcal{F}(F(X),F(X))$ such that $\sup\limits_{x \in K}\|\delta_{X}(x) - S\delta_{X}(x)\| < \epsilon$.\\
    As $f \in Lip_{0}(X,Y)$ the linearization map $T_{f} \in L(F(X),Y)$ and $T_{f} \circ S \circ \delta_{X} \in Lip_{0\mathcal{F}}(X,Y)$.
    Now, 
    \begin{eqnarray*}
      \sup\limits_{x \in K}\|T_{f} \circ S \circ \delta_{X}(x) -f(x)\| &=&  \sup\limits_{x \in K}\|T_{f} \circ S \circ \delta_{X}(x) -T_{f}\circ \delta_{X}(x)\| \\
      &\leq& \epsilon\|T_{f}\|. 
    \end{eqnarray*}
    Hence it follows that $(1) \implies (2)$, and $(2) \implies (1)$ trivially follows by considering $Y=X$.
\end{proof}
\begin{rem}
    For all Banach spaces $Y$, $Lip_{0}(Y, X) = \overline{Lip_{0\mathcal{F}}(Y, X)}^{\tau_{p}}$ implies that $X$ has Lipschitz $p$-approximation property, but we don't know whether the converse part holds or not.
\end{rem}
We demonstrate another characterization of the Lipschitz $p$-approximation property of a Banach space. 
\begin{prop} 
    A Banach space $X$ has the Lipschitz $p$-approximation property if and only if 
     for any Banach space $Y$ and any $f \in Lip_{0}(X,F(Y))$,   $ \ \beta_{Y}\circ f \in \overline{Lip_{0\mathcal{F}}(X,Y)}^{\tau_{p}}$.  
\end{prop}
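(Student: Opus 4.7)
The plan is to mirror the proof of Proposition \ref{Prop3.4}, upgrading the approximation from the $\tau_c$ topology to the $\tau_p$ topology by invoking Theorem \ref{thm0.2} in place of \cite[Proposition~3.2]{APITLM}. The backward implication is immediate: specialize $Y = X$ and $f = \delta_X$. Then Lemma \ref{Lem}(3) gives $\beta_X \circ \delta_X = \operatorname{Id}_X$, so the hypothesis forces $\operatorname{Id}_X \in \overline{Lip_{0\mathcal{F}}(X,X)}^{\tau_p}$, which is precisely the definition of $Lip$-$p.a.p$.

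For the forward implication, I would fix a Banach space $Y$, a map $f \in Lip_0(X, F(Y))$, a $p$-compact subset $K \subset X$, and $\epsilon > 0$. By Lemma \ref{Lem}(4), $f$ admits a linearization $T_f \in L(F(X), F(Y))$ with $\|T_f\| = Lip(f)$. By Theorem \ref{thm0.2}, the hypothesis $Lip$-$p.a.p$ on $X$ is equivalent to $\operatorname{Id}_{F(X)} \in \overline{\mathcal{F}(F(X), F(X))}^{\mathcal{T}\delta_p}$, so there exists $S \in \mathcal{F}(F(X), F(X))$ with $\sup_{x \in K} \|\delta_X(x) - S\delta_X(x)\| < \epsilon$. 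Setting $g := \beta_Y \circ T_f \circ S \circ \delta_X$, the range of $g$ lies in the finite-dimensional space $\beta_Y(T_f(S(F(X))))$, so $g \in Lip_{0\mathcal{F}}(X, Y)$. Using that $\beta_Y$ is a quotient map (so $\|\beta_Y\| \leq 1$), one obtains
$$\sup_{x \in K}\|\beta_Y \circ f(x) - g(x)\| = \sup_{x \in K}\|\beta_Y \circ T_f(\delta_X(x) - S\delta_X(x))\| \leq Lip(f)\cdot \epsilon,$$
which yields $\beta_Y \circ f \in \overline{Lip_{0\mathcal{F}}(X, Y)}^{\tau_p}$.

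I do not anticipate a substantial obstacle here; the argument is a direct adaptation of Proposition \ref{Prop3.4} to $p$-compact sets, with Theorem \ref{thm0.2} furnishing the required finite-rank approximation of $\operatorname{Id}_{F(X)}$ in the $\mathcal{T}\delta_p$ topology. Alternatively, one could shortcut the forward direction by invoking the preceding proposition characterizing $Lip$-$p.a.p$ as $Lip_0(X, Z) = \overline{Lip_{0\mathcal{F}}(X, Z)}^{\tau_p}$ for every Banach space $Z$: applied to $Z = Y$ together with the observation that $\beta_Y \circ f \in Lip_0(X, Y)$, the conclusion is instantaneous.
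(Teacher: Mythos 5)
Your proof is correct and follows essentially the same route as the paper: the paper's forward direction invokes the preceding proposition (applied with target $F(Y)$) to approximate $f$ by a finite-rank Lipschitz map and then post-composes with the norm-one quotient $\beta_Y$, which is exactly your argument with that proposition's proof inlined via Theorem \ref{thm0.2}; your suggested shortcut (applying the preceding proposition directly to $\beta_Y\circ f\in Lip_0(X,Y)$) is an equally valid, slightly more direct variant.
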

\begin{proof}
    Proof of the sufficient part directly follows by considering $Y=X$ and $f=\delta_{X}$.
    
    For the necessary part suppose $Y$ be a Banach space, $K$ be a $p$-compact subset of $X$, $\epsilon > 0$ and $f \in Lip_{0}(X,F(Y))$. Therefore by the previous Proposition there exists $g \in Lip_{0 \mathcal{F}}(X,F(Y))$ such that $\sup\limits_{x \in K}\|f(x) - g(x)\| < \epsilon $.
    
    Now $\sup\limits_{x \in K}\|\beta_{Y} \circ f (x) - \beta_{Y} \circ g (x)\| \leq \sup\limits_{x \in K}\|f(x) - g(x)\| < \epsilon$ and it immediately completes the proof.
\end{proof}
\begin{prop}
    For a Banach space $X$, if $F(X)$ has the Lipschitz approximation property, then $X$ has the Lipschitz $p$-approximation property.
\end{prop}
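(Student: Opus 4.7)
The plan is to transfer the Lipschitz approximation property from $F(X)$ down to $X$ using the canonical linearization/inclusion pair $\delta_X$ and $\beta_X$, essentially mimicking the argument of the earlier proposition "$F(X)$ has a.p implies $X$ has Lip-a.p" but now only needing $p$-compact test sets on the $X$-side.

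Fix a $p$-compact subset $K \subset X$ and $\epsilon > 0$. By the remark immediately after the definition of the seminorms $P_K$ every $p$-compact set is compact, so $K$ is compact in $X$. Since $\delta_X : X \to F(X)$ is Lipschitz with $\mathrm{Lip}(\delta_X) = 1$, the image $\delta_X(K)$ is a compact subset of $F(X)$. The hypothesis that $F(X)$ has the Lipschitz approximation property now yields some $g \in Lip_{0\mathcal{F}}(F(X), F(X))$ with
\[
\sup_{x \in K} \|g(\delta_X(x)) - \delta_X(x)\|_{F(X)} < \epsilon.
\]

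The next step is to manufacture a candidate Lipschitz finite-rank operator on $X$ by pre- and post-composing with $\delta_X$ and $\beta_X$. Set $h := \beta_X \circ g \circ \delta_X : X \to X$. Then $h(0) = 0$, $h$ is Lipschitz as a composition of Lipschitz maps, and since $g(F(X))$ is finite dimensional its image under the bounded linear map $\beta_X$ is also finite dimensional; hence $h \in Lip_{0\mathcal{F}}(X, X)$. Using $\beta_X \circ \delta_X = Id_X$ from Lemma \ref{Lem}(3) together with $\|\beta_X\| = 1$, one obtains
\[
\sup_{x \in K} \|Id_X(x) - h(x)\| \leq \|\beta_X\|\, \sup_{x \in K}\|\delta_X(x) - g(\delta_X(x))\| < \epsilon.
\]
This shows $Id_X \in \overline{Lip_{0\mathcal{F}}(X, X)}^{\tau_p}$, which is exactly the Lipschitz $p$-approximation property for $X$.

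There is no real obstacle here; the only point requiring a moment's care is checking that $h$ lives in $Lip_{0\mathcal{F}}(X,X)$ rather than merely in $Lip_0(X,X)$, and that the estimate survives intact through $\beta_X$. Both are immediate because $g$ has finite-dimensional range (which is preserved by the linear $\beta_X$) and because $\beta_X$ is a quotient map of norm one. In fact the same argument would deliver the stronger conclusion that $X$ has the Lipschitz approximation property itself, but for the purposes of this section only the $p$-version is asserted.
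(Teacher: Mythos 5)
Your argument is correct and is essentially the paper's own proof: both take a Lipschitz finite-rank $g$ on $F(X)$ approximating the identity on the compact set $\delta_X(K)$ and push it down via $\beta_X(\cdot)\circ\delta_X$, using $\beta_X\circ\delta_X=Id_X$ and $\|\beta_X\|=1$. Your closing observation that the same argument yields the stronger conclusion that $X$ has the Lipschitz approximation property is also accurate, since only compactness of $K$ (not $p$-compactness) is used.
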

\begin{proof}
    Let $K$ be a $p$-compact subset of $X$ and $\epsilon>0$. Since $F(X)$ has Lipschitz approximation property and $\delta_X(K)$ is compact; there exists $S \in Lip_{0\mathcal{F}}(F(X),F(X))$ such that $\sup\limits_{x \in K}\|\delta_{X}(x) - S\delta_{X}(x)\| < \epsilon$.\\
    Now 
    \begin{eqnarray*}
         \sup\limits_{x \in K}\|Id_{X}(x) - \beta_{X}S\delta_{X}(x)\| &\leq& \sup\limits_{x \in K}\|\delta_{X}(x) - S\delta_{X}(x)\| < \epsilon
    \end{eqnarray*}
    Hence $Id_{X} \in \overline{Lip_{0\mathcal{F}}(X,X)}^{\tau_{p}}$ and $X$ has $Lip-p.a.p$.
\end{proof}
The following lemma is easy to prove, which will be used further.
\begin{lem}
    For any two Banach spaces $X, Y,$ and $T \in L(X, Y)$, $\mathcal{J}_{Y}(Tv) = T^{**}(\mathcal{J}_{X}(v))$ for all $v \in X$, where $\mathcal{J}$ is the canonical embedding map from a normed linear space to its double dual space. 
\end{lem}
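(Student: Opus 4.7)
The plan is the standard two-line verification: since both $\mathcal{J}_Y(Tv)$ and $T^{**}(\mathcal{J}_X(v))$ live in $Y^{**}$, it suffices to check that they agree as functionals on $Y^*$. Fix an arbitrary $v \in X$ and an arbitrary $\psi \in Y^*$, and evaluate each side.

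For the left-hand side, the definition of the canonical embedding $\mathcal{J}_Y \colon Y \to Y^{**}$ gives $\mathcal{J}_Y(Tv)(\psi) = \psi(Tv)$. For the right-hand side, I would unfold the bi-adjoint twice: by the definition of $T^{**}$ as the adjoint of $T^*$, we have $T^{**}(\mathcal{J}_X(v))(\psi) = \mathcal{J}_X(v)(T^*\psi)$, and then by the definition of $\mathcal{J}_X$, this equals $(T^*\psi)(v) = \psi(Tv)$. Since both sides coincide at every $\psi \in Y^*$, the two elements of $Y^{**}$ are equal, which is exactly the claim.

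The step-by-step order I would follow is: (i) fix $v$ and introduce a test functional $\psi \in Y^*$; (ii) apply the definition $\mathcal{J}_Y(y)(\eta) = \eta(y)$ to the left side; (iii) apply the adjoint identity $T^{**}\phi(\psi) = \phi(T^*\psi)$ followed by the definition of $\mathcal{J}_X$ on the right side; (iv) conclude by the arbitrariness of $\psi$. There is no genuine obstacle — the result is a direct unwinding of definitions — so the main point in writing it up is simply to spell out the two adjoint-and-embedding identities cleanly so that the chain $\psi(Tv) = \mathcal{J}_Y(Tv)(\psi) = T^{**}(\mathcal{J}_X(v))(\psi)$ is transparent.
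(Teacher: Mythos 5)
Your proof is correct and is exactly the standard unwinding of definitions; the paper itself offers no proof, simply declaring the lemma "easy to prove," and your two-line verification via testing against an arbitrary $\psi \in Y^*$ is the argument the author evidently had in mind.
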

\begin{lem}\label{lem4.13}
    For any Banach spaces $X,Y$; $\ L(X,Y) \subset \overline{\mathcal{F}(X,Y)}^{\tau_{p}} $ if and only if $\ T\beta_{X} \in \overline{\mathcal{F}(F(X),Y)}^{\tau_{p}}$ for all $ T \in L(X,Y)$.
\end{lem}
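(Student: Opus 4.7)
My plan is to prove the equivalence by direct construction in both directions, using the interplay between the quotient map $\beta_{X} : F(X) \to X$ and the isometric (but nonlinear) embedding $\delta_{X} : X \to F(X)$, together with the fact that bounded linear maps send $p$-compact sets to $p$-compact sets. The forward direction I expect to be essentially routine; the backward direction is where the real subtlety lies.

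For the forward direction ($\Rightarrow$), I would fix $T \in L(X,Y)$, a $p$-compact set $K' \subset F(X)$ and $\epsilon > 0$. Since $\beta_{X}$ is bounded linear, $\beta_{X}(K')$ is $p$-compact in $X$, so the hypothesis supplies $S \in \mathcal{F}(X,Y)$ with $\sup_{x \in \beta_{X}(K')}\|Tx-Sx\| < \epsilon$. The composition $\widetilde{S} := S \circ \beta_{X} \in \mathcal{F}(F(X),Y)$ then satisfies
\[
\sup_{\gamma \in K'}\|T\beta_{X}(\gamma) - \widetilde{S}(\gamma)\| = \sup_{x \in \beta_{X}(K')}\|Tx - Sx\| < \epsilon,
\]
proving $T\beta_{X} \in \overline{\mathcal{F}(F(X),Y)}^{\tau_{p}}$.

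For the backward direction ($\Leftarrow$), given $T \in L(X,Y)$, a $p$-compact $K \subset X$ with $K \subset \{\sum a_{n}x_{n} : (a_{n}) \in B_{\ell_{p'}}\}$ for some $(x_{n}) \in \ell_{p}^{s}(X)$, and $\epsilon > 0$, I would exploit the fact that $\delta_{X}$ is an isometry with $\delta_{X}(0) = 0$ to conclude $(\delta_{X}(x_{n})) \in \ell_{p}^{s}(F(X))$. Hence $K' := \{\sum a_{n}\delta_{X}(x_{n}) : (a_{n}) \in B_{\ell_{p'}}\}$ is $p$-compact in $F(X)$ and $\beta_{X}(K') \supseteq K$ by linearity of $\beta_{X}$ and the identity $\beta_{X}\circ\delta_{X} = Id_{X}$. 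The hypothesis then yields $S \in \mathcal{F}(F(X),Y)$ with $\sup_{\gamma \in K'}\|T\beta_{X}(\gamma) - S(\gamma)\| < \epsilon$, which in particular gives, for each $x = \sum a_{n}x_{n} \in K$ with associated $\gamma_{x} = \sum a_{n}\delta_{X}(x_{n}) \in K'$, the estimate $\|Tx - S(\gamma_{x})\| < \epsilon$.

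The main obstacle is converting this into a genuine linear finite-rank operator $S' \in \mathcal{F}(X,Y)$ approximating $T$ on $K$. The natural choice $S \circ \delta_{X} : X \to Y$ lies in $Lip_{0\mathcal{F}}(X,Y)$ but need not be linear, since the coefficient functionals of $S$ sit in $F(X)^{*} = X^{\#}$, which are Lipschitz but not necessarily linear on $X$. The strategy to resolve this is to exploit the fact that $T\beta_{X} = T \circ \beta_{X}$ already vanishes on $\ker\beta_{X}$, and to arrange (by composing with a suitable projection onto a linear complement of $\ker\beta_{X}$, or by a further small perturbation) that the approximating $S$ may be taken to factor as $S = S' \circ \beta_{X}$ with $S' \in \mathcal{F}(X,Y)$. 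This would then give $\sup_{x \in K}\|Tx - S'x\| < \epsilon$ and complete the proof.
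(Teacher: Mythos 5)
Your forward direction is exactly the paper's argument and is fine. The backward direction, however, has a genuine gap precisely at the point you yourself flag as "the main obstacle": you never actually produce a \emph{linear} finite-rank operator on $X$, and the two strategies you sketch for doing so do not work. The kernel of $\beta_{X}$ is not complemented in $F(X)$ in general, so "composing with a suitable projection onto a linear complement of $\ker\beta_{X}$" is not available; and there is no reason a finite-rank $S\in\mathcal{F}(F(X),Y)$ that approximates $T\beta_{X}$ on a $p$-compact set should factor, even after a small perturbation, as $S'\circ\beta_{X}$ with $S'$ linear. Concretely, writing $S=\sum_{i=1}^{m}\phi_{i}\otimes y_{i}$ with $\phi_{i}\in F(X)^{*}\cong X^{\#}$, the map $S\circ\delta_{X}=\sum_{i}(\phi_{i}\circ\delta_{X})\otimes y_{i}$ has merely Lipschitz coefficient functionals, and there is no canonical linearization of these. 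Also note that your assignment $x\mapsto\gamma_{x}=\sum a_{n}\delta_{X}(x_{n})$ depends on a choice of representation of $x$ and is neither well defined nor linear, so the estimate $\|Tx-S(\gamma_{x})\|<\epsilon$ cannot be read as an estimate against a single operator applied to $x$.

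The missing idea is the Godefroy--Kalton-type \emph{linear} lifting, which is how the paper closes the argument. Since $K$ is $p$-compact it is compact, so $E=\overline{\mathrm{span}}(K)$ is a separable closed subspace of $X$; by \cite[Lemma~3.4]{APITLM} there is a norm-one linear operator $U:X\to F(X)^{**}$ with $\beta_{X}^{**}Uv=\mathcal{J}_{X}v$ for all $v\in X$ and $U(E)\subset F(X)$. Then $U(K)$ is a $p$-compact subset of $F(X)$, the hypothesis gives $S\in\mathcal{F}(F(X),Y)$ with $\sup_{v\in K}\|T\beta_{X}Uv-SUv\|<\epsilon$, and $S\circ U$ (restricted appropriately) is a genuinely linear finite-rank operator on $X$; a short computation in the biduals, using $\beta_{X}^{**}U=\mathcal{J}_{X}$, shows $\|SUv-Tv\|=\|T\beta_{X}Uv-SUv\|<\epsilon$ for $v\in K$. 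Without this lifting (or an equivalent device), your backward implication does not go through.
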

\begin{proof}
    Suppose $T \in L(X,Y)$, $K$ is a $p$-compact subset of $F(X)$, and $\epsilon>0$. Since $\beta_X(K)$ is $p$-compact in $X$, there exists $S \in \mathcal{F}(X,Y)$ such that $\sup\limits_{\gamma \in K}\|T\beta_{X}(\gamma) - S\beta_{X}(\gamma)\| < \epsilon$, and hence the forward part follows immediately.
    
    Conversely, let $T \in L(X,Y)$ such that $T\beta_{X} \in \overline{\mathcal{F}(F(X),Y)}^{\tau_{p}}$, $K$ be a $p$-compact subset of $X$ and $\epsilon>0$. \\
    Consider $E = \overline{span(K)}$, a separable closed subspace of $X$, as being a $p$-compact set $K$ is compact. Therefore, by \cite[Lemma~3.4]{APITLM} there exists $ U : X \xrightarrow{} F(X)^{\ast \ast}$ bounded linear map such that $\|U\| = 1, \ \beta_{X}^{\ast \ast}Uv= \mathcal{J}_{X}v \ \ \forall\ v \in X$, and $U(E) \subset F(X).$  \\
    Since $T\beta_{X} \in \overline{\mathcal{F}(F(X),Y)}^{\tau_{p}}$ and $U(K)$ be a $p$-compact subset of $F(X)$; there exists $S \in \mathcal{F}(F(X),Y)$ such that $\sup\limits_{v \in K}\|T\beta_{X}U(v) - SU(v)\| < \epsilon$.
    Now, for any $v \in K$
    \begin{eqnarray*}
        \|SU(v)-T(v)\| &=& \|\mathcal{J}_{Y}(SU(v)) -\mathcal{J}_{Y}(T(v)) \|\\
        &=& \|S^{\ast \ast}\mathcal{J}_{F(X)}(U(v)) -\mathcal{J}_{Y}(T(v)) \|\\
        &=& \|S^{\ast \ast}\mathcal{J}_{F(X)}(U(v)) -T^{**}\beta_{X}^{\ast \ast}Uv \|\\
        &=& \|\mathcal{J}_{Y}(SU(v)) - (T\beta_{X})^{\ast \ast}(\mathcal{J}_{F(X)}(Uv))\|\\
        &=& \|\mathcal{J}_{Y}SUv - \mathcal{J}_{Y}T\beta_{X}Uv\|\\
        &=&\|T\beta_{X}U(v) - SU(v)\| \\
        &<& \epsilon.
    \end{eqnarray*}
    which gives $T \in \overline{\mathcal{F}(X,Y)}^{\tau_{p}}$.
\end{proof}
The previous lemma gives us the following result.
\begin{cor}
     A Banach space $X$ has the $p$-approximation property if and only if $T\beta_{X} \in \overline{\mathcal{F}(F(X), Y)}^{\tau_{p}} \ \mbox{for all} \ T \in L(X, Y)$, and for any Banach space $Y$.
\end{cor}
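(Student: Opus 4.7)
The plan is to reduce the statement to the standard fact that $X$ has the $p$-approximation property if and only if $L(X,Y) \subset \overline{\mathcal{F}(X,Y)}^{\tau_{p}}$ for every Banach space $Y$, and then invoke Lemma \ref{lem4.13} to translate between this classical form and the desired form involving $\beta_{X}$. This mirrors the approach used earlier in the paper for other equivalences via $\beta_{X}$.

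For the forward direction, I would suppose $X$ has the $p$-approximation property and take an arbitrary Banach space $Y$, operator $T \in L(X,Y)$, $p$-compact subset $K$ of $X$, and $\epsilon > 0$. Applying $p$-a.p directly to $K$ would yield $S \in \mathcal{F}(X,X)$ with $\sup_{x \in K}\|x - Sx\| < \epsilon/(\|T\|+1)$, and then $T \circ S \in \mathcal{F}(X,Y)$ satisfies $\sup_{x \in K}\|Tx - TSx\| \leq \|T\| \sup_{x \in K}\|x - Sx\| < \epsilon$. This shows $L(X,Y) \subset \overline{\mathcal{F}(X,Y)}^{\tau_{p}}$, and Lemma \ref{lem4.13} then delivers $T\beta_{X} \in \overline{\mathcal{F}(F(X),Y)}^{\tau_{p}}$ for every $T \in L(X,Y)$ and every Banach space $Y$.

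For the converse, assume the condition $T\beta_{X} \in \overline{\mathcal{F}(F(X),Y)}^{\tau_{p}}$ holds for every Banach space $Y$ and every $T \in L(X,Y)$. Lemma \ref{lem4.13} converts this back into $L(X,Y) \subset \overline{\mathcal{F}(X,Y)}^{\tau_{p}}$ for every $Y$. Specializing to $Y = X$ and $T = Id_{X}$ would give, for any $p$-compact subset $K$ of $X$ and any $\epsilon > 0$, a finite-rank operator $S \in \mathcal{F}(X,X)$ with $\sup_{x \in K}\|x - Sx\| < \epsilon$, which is precisely the definition of the $p$-approximation property for $X$.

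There is no real obstacle here; the substantive content has been absorbed into Lemma \ref{lem4.13}, and what remains is the elementary equivalence between the definition of $p$-a.p and the uniform approximability of arbitrary bounded operators on $p$-compact sets, obtained by composing an identity approximant with $T$ in one direction and by specializing to $T = Id_{X}$ in the other.
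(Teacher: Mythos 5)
Your proposal is correct and follows exactly the route the paper intends: the paper gives no written proof beyond the remark that the corollary follows from Lemma \ref{lem4.13}, and the details you supply (the elementary equivalence of the $p$-approximation property with $L(X,Y)\subset\overline{\mathcal{F}(X,Y)}^{\tau_{p}}$ for all $Y$, obtained by composing an identity approximant with $T$ one way and specializing to $T=Id_{X}$ the other way, then translating via the lemma) are precisely the intended filling-in.
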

The proof of the next proposition directly follows from the above lemma \ref{lem4.13}.
\begin{prop}
    For any Banach space $X$, if $F(X)$ has the $p$-approximation property, then $X$ has $p$-approximation property, and further $X$ has the Lipschitz $p$-approximation property.
\end{prop}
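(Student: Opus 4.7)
The plan is to deduce the proposition in two steps: first, transfer the $p$-approximation property from $F(X)$ to $X$ via Lemma~\ref{lem4.13}, and second, invoke the note immediately after the definition of Lip-$p$-a.p. (linear $p$-a.p. implies $Lip$-$p$-a.p.) to obtain the Lipschitz version.

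For the first step, I would begin by upgrading the assumption on $F(X)$: for every Banach space $Y$, every $T \in L(F(X), Y)$ lies in $\overline{\mathcal{F}(F(X), Y)}^{\tau_p}$. This is the exact $p$-analogue of Lemma~\ref{lem1.1}; given a $p$-compact set $K \subset F(X)$ and $\epsilon > 0$, one picks $S \in \mathcal{F}(F(X), F(X))$ with $\sup_{\gamma \in K}\|\gamma - S\gamma\| < \epsilon/(1+\|T\|)$, so that $TS \in \mathcal{F}(F(X), Y)$ satisfies $\sup_{\gamma \in K}\|T\gamma - TS\gamma\| < \epsilon$. Now, for any Banach space $Y$ and any $T \in L(X, Y)$, the composite $T\beta_X$ belongs to $L(F(X), Y)$ and hence to $\overline{\mathcal{F}(F(X), Y)}^{\tau_p}$. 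Since this holds for every $T \in L(X,Y)$, the reverse implication of Lemma~\ref{lem4.13} yields $L(X, Y) \subset \overline{\mathcal{F}(X, Y)}^{\tau_p}$. Specializing to $Y = X$ and $T = Id_X$ gives $Id_X \in \overline{\mathcal{F}(X, X)}^{\tau_p}$, which is exactly the $p$-approximation property of $X$.

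For the second conclusion, one simply appeals to the note that any Banach space with linear $p$-a.p. automatically has $Lip$-$p$-a.p., finishing the proof. There is no substantive obstacle here; as the hint indicates, the statement is a direct corollary of Lemma~\ref{lem4.13} combined with the transfer of the identity approximation to arbitrary operators, so the written-out proof will be short.
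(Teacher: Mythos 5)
Your proposal is correct and follows essentially the same route as the paper: both pass from the $p$-approximation property of $F(X)$ to the inclusion $T\beta_X \in \overline{\mathcal{F}(F(X),Y)}^{\tau_p}$ and then apply Lemma~\ref{lem4.13} to conclude that $X$ has the $p$-approximation property, with the Lipschitz version following from the note that linear $p$-a.p.\ implies $Lip$-$p$-a.p. The only difference is that you explicitly prove the step $L(F(X),Y)\subset\overline{\mathcal{F}(F(X),Y)}^{\tau_p}$ via the composition argument, whereas the paper simply invokes it as a known consequence of $F(X)$ having the $p$-approximation property.
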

\begin{proof}
    Let $Y$ be a Banach space and $T \in L(X, Y)$. Since $F(X)$ has $p.a.p$, $L(F(X), Y) = \overline{\mathcal{F}(F(X), Y)}^{\tau_p}$. Now $T\beta_X \in L(F(X), Y) = \overline{\mathcal{F}(F(X), Y)}^{\tau_p}$, and hence $X$ has $p.a.p$.
\end{proof}
\subsection{Some examples}
From the previous discussion, we can immediately deduce the following facts about the Banach space $X.$ If $X$ has the approximation property then $X$ has $Lip-a.p,$ $Lip-p.a.p,$ $p.a.p$. Also, we know that any Banach space with a Schauder basis has the approximation property. In \cite{OSBILFS} H\'ajek and Perneck\'a proved that $F(\ell_{1})$ and $F(\mathbb{R}^{n})$ have a Schauder basis. Therefore, these are examples of Lipschitz-free spaces of $\ell_{1}$ and $\mathbb{R}^{n}$, respectively, having $Lip-a.p,$ $Lip-p.a.p,$ $p.a.p$.\\
 \cite[Theorem~6.4]{COWAFTSLP} provides that any Banach space $X$ has $p$-approximation property (for $1\leq p \leq 2$), and hence $X$ has $Lip-p.a.p$ (for $1\leq p \leq 2$).\\
 From the above section and \cite{APITLM}, we have the following diagram as a summary.
\begin{center}
\begin{tikzpicture}
\matrix (m) 
[matrix of math nodes,row sep=4em,column sep=4em,minimum width=2em] 
{
X^{\#}\ has \ a.p & X \ has \ Lip-a.p & X \ has \ a.p \\
           F(X) \ has \ a.p & F(X) \ has \ Lip-a.p & X \ has \ Lip-p.a.p \\ 
           F(X)\ has \ p.a.p & & & X\ has\ p.a.p\\
}; 
\path[-stealth] 
(m-1-1) edge [double] node [above] {} (m-1-2)
        edge [double] node [below] {} (m-2-1)
(m-2-1) edge [double] node [right] {} (m-1-2)
(m-2-1) edge [double] node [right] {} (m-2-2)
(m-1-3) edge [double] node [left] {} (m-1-2)
(m-1-3) edge [double] node [right] {} (m-3-4)
(m-2-2) edge [double] node [right] {} (m-2-3)
(m-1-2) edge [double] node [right] {} (m-2-3)
(m-2-1) edge [double] node [below] {} (m-3-1)
(m-3-1) edge [double] node [right] {} (m-3-4)
(m-3-4) edge [double] node [left] {} (m-2-3);
\end{tikzpicture}
\end{center}

\section{\bf Factorization of Lipschitz operators that belong to the dual operator ideal of Lipschiz $p$ -compact operator} \label{FOLPCO}
In \cite{COWFTSLP} Karn and Sinha showed that $\mathcal{K}^d_p$ may be regarded as a
compact-type operator ideal whose associated maximal operator ideal is
$\Pi_p$. More precisely, they proved the composition/decomposition relation
$$
\Pi_p \circ \mathcal{K} = \mathcal{K}^d_p \qquad (1 \leq p \leq \infty).
$$
Here in this section, we establish an analogous decomposition relation in the Lipschitz setting. Recall that any compact operator between Banach spaces can be expressed as the composition of a bounded linear operator with a compact operator (see \cite[pg 225]{TVSII}). We have obtained a corresponding result here for Lipschitz compact operators.

\begin{thm} \label{THM2.1}
Let $f \in Lip_{0}(X, Y)$. Then $f \in \mathcal{K}_{p}^{Ld}(X,Y)$ if and only if $T_{f} \in \mathcal{K}_{p}^{d}(F(X),Y)$ with $k_{p}^{d}(T_{f}) = k_{p}^{Ld}(f).$
\end{thm}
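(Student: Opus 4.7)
The plan is to reduce the statement to an isometric identification between the Lipschitz transpose $f^{t}\colon Y^{*}\to X^{\#}$ and the linear adjoint $T_{f}^{*}\colon Y^{*}\to F(X)^{*}$, using the canonical isometry $Q_{X}\colon X^{\#}\to F(X)^{*}$ from Lemma~\ref{Lem}(1). By the definitions recalled in the preliminaries, $f\in\mathcal{K}_{p}^{Ld}(X,Y)$ means that the Lipschitz transpose $f^{t}(y^{*}):= y^{*}\circ f$ is a $p$-compact linear operator from $Y^{*}$ into $X^{\#}$, with norm $k_{p}^{Ld}(f)=k_{p}(f^{t})$; and $T_{f}\in\mathcal{K}_{p}^{d}(F(X),Y)$ means $T_{f}^{*}\in\mathcal{K}_{p}(Y^{*},F(X)^{*})$, with $k_{p}^{d}(T_{f})=k_{p}(T_{f}^{*})$. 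So the whole theorem follows once I can establish the identity $T_{f}^{*}=Q_{X}\circ f^{t}$.

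To verify this identity, I would test both sides on the total set $\{\delta_{x}:x\in X\}\subset F(X)$. For any $y^{*}\in Y^{*}$ and $x\in X$, the defining property $T_{f}\circ\delta_{X}=f$ of the linearization from Lemma~\ref{Lem}(4) gives
\[
Q_{X}\bigl(f^{t}(y^{*})\bigr)(\delta_{x})=\delta_{x}(y^{*}\circ f)=y^{*}(f(x))=y^{*}(T_{f}\delta_{x})=T_{f}^{*}(y^{*})(\delta_{x}).
\]
Both sides are bounded linear functionals on $F(X)=\overline{\operatorname{span}}\{\delta_{x}:x\in X\}$ and they coincide on this dense spanning set, so they agree everywhere. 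Hence $T_{f}^{*}=Q_{X}\circ f^{t}$.

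Finally, since $Q_{X}$ is a surjective linear isometry, composition with $Q_{X}$ preserves $p$-compactness and the $p$-compact norm: if $(g_{n})\in\ell_{p}^{s}(X^{\#})$ realizes $f^{t}(B_{Y^{*}})$ inside the $p$-convex hull of $(g_{n})$, then $(Q_{X}g_{n})\in\ell_{p}^{s}(F(X)^{*})$ realizes $T_{f}^{*}(B_{Y^{*}})=Q_{X}(f^{t}(B_{Y^{*}}))$ with the same $\|\cdot\|_{p}$-sum, and the inverse isometry $Q_{X}^{-1}$ supplies the converse direction. Consequently $f^{t}\in\mathcal{K}_{p}(Y^{*},X^{\#})$ if and only if $T_{f}^{*}\in\mathcal{K}_{p}(Y^{*},F(X)^{*})$, and then $k_{p}(f^{t})=k_{p}(T_{f}^{*})$, which gives both the stated equivalence and the norm equality $k_{p}^{Ld}(f)=k_{p}^{d}(T_{f})$. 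I do not anticipate a real obstacle here: the argument is a transparent transfer through the $X^{\#}\cong F(X)^{*}$ duality, with the only genuine technical point being the density step that pins down $T_{f}^{*}=Q_{X}\circ f^{t}$.
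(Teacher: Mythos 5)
Your proposal is correct and follows essentially the same route as the paper: both reduce the theorem to the identity $T_{f}^{*}=Q_{X}\circ f^{t}$ (which the paper cites from Jim\'enez-Vargas et al.\ while you verify it directly on the total set $\{\delta_{x}\}$) and then transfer $p$-compactness and the $k_{p}$-norm through the surjective isometry $Q_{X}$, which is exactly the content of the paper's Lemma~\ref{Lem2.5} and Lemma~\ref{Lem 2.6}(2). The only implicit step is the identification $k_{p}^{Ld}(f)=k_{p}(f^{t})$ for the linear map $f^{t}$, which the paper likewise obtains from the fact that the Lipschitz and linear $p$-compact norms coincide on linear operators.
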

\begin{proof}
Let $f \in \mathcal{K}_{p}^{Ld}(X,Y)$. Therefore, by \cite[Definition 3.8]{LOITAP}, $f^{t} \in \mathcal{K}_{p}(Y^{*}, X^{\#})$. Now, $T_{f}^{*} = Q_{X} \circ f^{t} \in \mathcal{K}_{p}(Y^{*}, F(X)^{*})$ (by \cite[Theorem~3.1]{LCO} and ideal \ property). Hence $T_{f} \in \mathcal{K}_{p}^{d}(F(X),Y)$. 

Conversely, let $f \in Lip_{0}(X,Y)$ such that  $T_{f} \in \mathcal{K}_{p}^{d}(F(X),Y)$. Thus by \cite[Proposition~3.2]{LPCM} $T_{f}^{*} \in \mathcal{K}_{p}(Y^{*}, F(X)^{*}) \subset \mathcal{K}_{p}^{L}(Y^{*}, F(X)^{*})$ and using the ideal property we have $f^{t} = Q_{X}^{-1} \circ T_{f}^{*} \in \mathcal{K}_{p}^{L}(Y^{*}, X^{\#})$. Therefore, it immediately follows that $f \in \mathcal{K}_{p}^{Ld}(X,Y)$.\\
Equality of the norms part has been proved in Lemma \ref{Lem 2.6}.
\end{proof}
\begin{prop}
Let $X$ be a normed linear space and $Y$ be a Banach space. Then $\mathcal{K}_{p}^{Ld}(X,Y) \subset \prod_{p}^{L}(X,Y)$ with $\pi_{p}^{L}(f) \leq k_{p}^{Ld}(f).$
\end{prop}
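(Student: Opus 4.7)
The plan is to linearize and reduce the claim to the classical Karn--Sinha inclusion $\mathcal{K}_p^d \subset \Pi_p$ applied on the Lipschitz-free space $F(X)$. Since $F(X)$ is a Banach space (regardless of whether $X$ is complete), the strategy is clean: push $f$ to its linearization $T_f$, apply the linear result on $F(X)$, and pull back along $\delta_X$.

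First, I would invoke Theorem \ref{THM2.1} to transfer the hypothesis to the linearization: $f \in \mathcal{K}_p^{Ld}(X,Y)$ yields $T_f \in \mathcal{K}_p^d(F(X), Y)$ with $k_p^d(T_f) = k_p^{Ld}(f)$. Next, by the classical Karn--Sinha relation recalled at the start of Section \ref{FOLPCO} (the identity $\Pi_p \circ \mathcal{K} = \mathcal{K}_p^d$, cf.\ \cite{COWFTSLP}), this gives $T_f \in \Pi_p(F(X), Y)$ with $\pi_p(T_f) \leq k_p^d(T_f)$. Finally, writing $f = T_f \circ \delta_X$ and using that every linear $p$-summing operator between Banach spaces is Lipschitz $p$-summing with the same constant, together with the Lipschitz ideal property of $\Pi_p^L$ (stability under precomposition with Lipschitz maps; see \cite{LOITAP}), I would obtain
$$\pi_p^L(f) \leq \pi_p^L(T_f)\cdot Lip(\delta_X) = \pi_p(T_f).$$
Chaining the three inequalities yields $\pi_p^L(f) \leq \pi_p(T_f) \leq k_p^d(T_f) = k_p^{Ld}(f)$, as desired.

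The main obstacle I anticipate is the last step: carefully justifying that the Lipschitz $p$-summing norm of the linear map $T_f$ does not exceed its linear $p$-summing norm, so that the Lipschitz ideal property can be applied without picking up an extraneous constant. This identification of norms is the only nontrivial input beyond Theorem \ref{THM2.1} and the classical Karn--Sinha inclusion; the remainder is bookkeeping along the factorization $f = T_f \circ \delta_X$.
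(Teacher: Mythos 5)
Your proposal is correct and follows essentially the same route as the paper: linearize via Theorem \ref{THM2.1} to get $T_f \in \mathcal{K}_p^d(F(X),Y)$, apply the Karn--Sinha inclusion $\mathcal{K}_p^d \subset \Pi_p$ with $\pi_p \leq k_p^d$, identify $\pi_p^L(T_f) = \pi_p(T_f)$ via Farmer--Johnson, and pull back along $f = T_f \circ \delta_X$ using the Lipschitz ideal property. The only cosmetic difference is that you reach the inclusion $\mathcal{K}_p^d \subset \Pi_p$ through the composition identity $\Pi_p \circ \mathcal{K} = \mathcal{K}_p^d$, whereas the paper cites it directly.
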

\begin{proof}
    Let $f \in \mathcal{K}_{p}^{Ld}(X,Y)$, then by Theorem \ref{THM2.1} $T_{f} \in \mathcal{K}_{p}^{d}(F(X),Y) \subset \prod_{p}(F(X),Y)$ (by \cite[Proposition~5.3]{COWAFTSLP}). Since $T_{f}$ is linear, using \cite[Theorem 2]{LPSO}, we have $T_{f} \in \prod_{p}^{L}(F(X),Y)$. Now, $f = T_{f} \circ \delta_{X} \in \prod_{p}^{L}(X,Y)$ (by \cite[Proposition 2.5]{LOITAP}).
    
    For the norm inequality using \cite[Theorem~2]{LPSO}, \cite[Remark~2(c)]{COWAFTSLP}, and Lemma \ref{Lem 2.6}(2)), respectively, we get $\pi_{p}^{L}(f) = \pi_{p}^{L}(T_{f} \circ \delta_{X}) \leq \pi_{p}^{L}(T_{f})= \pi_{p}(T_{f})\leq k_{p}^{d}(T_{f})= k_{p}^{Ld}(f)$.
\end{proof}
\begin{prop}
    Let $X$ and $Y$ be two Banach spaces. Then $\mathcal{K}_{p}^{d}(X,Y) \subset \mathcal{K}_{p}^{Ld}(X,Y).$ 
\end{prop}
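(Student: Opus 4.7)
The plan is to reduce this inclusion directly to Theorem \ref{THM2.1} via the linearization procedure, using only the ideal property of $\mathcal{K}_{p}$. Concretely, I would start with an arbitrary $T \in \mathcal{K}_{p}^{d}(X,Y)$ and regard it as a Lipschitz operator, since $L(X,Y) \subset Lip_{0}(X,Y)$. By Lemma \ref{Lem}(4) there is a unique bounded linear map $T_{T}: F(X) \to Y$ satisfying $T_{T} \circ \delta_{X} = T$. The map $T \circ \beta_{X}$ also satisfies $(T \circ \beta_{X}) \circ \delta_{X} = T \circ Id_{X} = T$, because $T$ is linear and $\beta_{X} \circ \delta_{X} = Id_{X}$, so uniqueness forces $T_{T} = T \circ \beta_{X}$.

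Next I would pass to adjoints. Since $T \in \mathcal{K}_{p}^{d}(X,Y)$ by hypothesis, $T^{*} \in \mathcal{K}_{p}(Y^{*}, X^{*})$. Taking the adjoint of $T_{T} = T \circ \beta_{X}$ gives $T_{T}^{*} = \beta_{X}^{*} \circ T^{*}$. The class $\mathcal{K}_{p}$ is a Banach operator ideal (see \cite{COWAFTSLP}), so post-composing the $p$-compact operator $T^{*}$ with the bounded linear map $\beta_{X}^{*}$ yields another $p$-compact operator. Hence $T_{T}^{*} \in \mathcal{K}_{p}(Y^{*}, F(X)^{*})$, which by definition means $T_{T} \in \mathcal{K}_{p}^{d}(F(X), Y)$.

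At this stage, Theorem \ref{THM2.1} applied to $f = T$ immediately delivers $T \in \mathcal{K}_{p}^{Ld}(X,Y)$, completing the inclusion. I do not expect any real obstacle; the argument is purely formal once one identifies the linearization as $T \circ \beta_{X}$. The only step requiring a moment of care is that identification, which is forced by the uniqueness clause of Lemma \ref{Lem}(4). As a sanity check, one can verify the same conclusion by a direct route: the Lipschitz transpose of a linear $T$ satisfies $T^{t} = \iota \circ T^{*}$, where $\iota : X^{*} \hookrightarrow X^{\#}$ is the canonical isometric inclusion, so $T^{t} \in \mathcal{K}_{p}(Y^{*}, X^{\#})$ again by the ideal property, which is exactly the defining condition for $T \in \mathcal{K}_{p}^{Ld}(X,Y)$.
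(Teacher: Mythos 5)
Your proposal is correct, and in fact it contains two proofs. Your main argument takes a genuinely different (longer) route from the paper: you identify the linearization of a linear $T$ as $T_{T}=T\circ\beta_{X}$ via the uniqueness clause of Lemma \ref{Lem}(4), observe that $T_{T}^{*}=\beta_{X}^{*}\circ T^{*}$ is $p$-compact by the ideal property of $\mathcal{K}_{p}$, and then invoke Theorem \ref{THM2.1} to pull the conclusion back to $T$. This is sound, and it has the virtue of reusing the machinery already set up for Theorem \ref{THM2.1}; the only points needing care --- the identification $T_{T}=T\circ\beta_{X}$ and the stability of $\mathcal{K}_{p}$ under left composition with bounded operators --- you handle correctly. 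The paper, by contrast, does exactly what you relegate to a closing ``sanity check'': for linear $f$ the Lipschitz transpose $f^{t}:Y^{*}\to X^{\#}$ is just $f^{*}$ followed by the canonical isometric inclusion $X^{*}\hookrightarrow X^{\#}$, so $f^{t}\in\mathcal{K}_{p}(Y^{*},X^{\#})$ (equivalently $\mathcal{K}_{p}^{L}(Y^{*},X^{\#})$, by the coincidence of linear and Lipschitz $p$-compactness for linear maps, \cite[Proposition~3.2]{LPCM}), which is precisely the defining condition for $f\in\mathcal{K}_{p}^{Ld}(X,Y)$. That direct route is a one-line proof and avoids Theorem \ref{THM2.1} entirely; your detour buys nothing extra here, but it is a legitimate alternative and a useful consistency check on the theorem.
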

\begin{proof}
    Suppose $f \in \mathcal{K}_{p}^{d}(X,Y)$, then by \cite[Proposition~3.2]{LPCM}$, f^{t} = f^{*} \in \mathcal{K}_{p}^{L}(Y^{*},X^{\#})$. Hence, it follows that $f \in \mathcal{K}_{p}^{Ld}(X,Y)$.
\end{proof}
\begin{rem} If $f \in \mathcal{K}_{p}^{Ld}(X,Y)$ is a linear map, then $f \in \mathcal{K}_{p}^{d}(X,Y)$, and in this case $k_{p}^{Ld}(f)=k_{p}^{d}(f)$.
\end{rem}
\begin{lem} \label{Lem2.5}
    Let  $X,Y$, and $G$ be Banach spaces, and $\mathfrak{I} \in L(Y,G)$ be an isometric isomorphism, and $T \in \mathcal{K}_{p}(X,Y)$, then $k_{p}(\mathfrak{I}T) = k_{p}(T)$.
\end{lem}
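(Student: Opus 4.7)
The plan is to prove the equality by establishing both inequalities directly from the definition of the $p$-compact norm, exploiting the fact that an isometric isomorphism preserves the $\ell_p^s$-norm of sequences and commutes with $p$-convex hulls.

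First I would recall the definition: $k_p(T) = \inf \|(y_n)\|_p$, where the infimum is taken over all strongly $p$-summable sequences $(y_n) \in \ell_p^s(Y)$ such that $T(B_X) \subset \left\{\sum_{n=1}^{\infty} a_n y_n : (a_n) \in B_{\ell_{p'}}\right\}$. Given any such admissible sequence $(y_n)$ for $T$, the sequence $(\mathfrak{I} y_n)$ is strongly $p$-summable in $G$, and because $\mathfrak{I}$ is isometric, $\|(\mathfrak{I} y_n)\|_p = \|(y_n)\|_p$. Moreover, linearity and continuity of $\mathfrak{I}$ give
\[
(\mathfrak{I}T)(B_X) = \mathfrak{I}\bigl(T(B_X)\bigr) \subset \left\{\sum_{n=1}^{\infty} a_n \mathfrak{I}y_n : (a_n) \in B_{\ell_{p'}}\right\}.
\]
Thus $(\mathfrak{I} y_n)$ is admissible for $\mathfrak{I}T$, and taking the infimum yields $k_p(\mathfrak{I}T) \leq k_p(T)$.

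For the reverse direction I would apply the same argument to the isometric isomorphism $\mathfrak{I}^{-1} \in L(G, Y)$: since $T = \mathfrak{I}^{-1}(\mathfrak{I}T)$, the inequality just established (with $\mathfrak{I}$ replaced by $\mathfrak{I}^{-1}$ and $T$ by $\mathfrak{I}T$) gives $k_p(T) = k_p(\mathfrak{I}^{-1}(\mathfrak{I}T)) \leq k_p(\mathfrak{I}T)$. Combining the two inequalities completes the proof.

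There is no real obstacle here; the statement is essentially the observation that the $p$-compact norm is invariant under composition with surjective linear isometries, which is immediate from the definition once one notices that an isometric isomorphism maps strongly $p$-summable sequences to strongly $p$-summable sequences of the same $\ell_p^s$-norm and preserves $p$-convex hulls.
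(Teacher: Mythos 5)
Your proposal is correct and follows essentially the same route as the paper: transport an admissible sequence $(y_n)$ for $T$ to the admissible sequence $(\mathfrak{I}y_n)$ for $\mathfrak{I}T$, note the isometry preserves the $\ell_p^s$-norm, and use the invertibility of $\mathfrak{I}$ to get the reverse inequality. The paper compresses both directions into the single word ``equivalent,'' whereas you spell out the two inequalities explicitly; the content is identical.
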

\begin{proof}
    As $T \in \mathcal{K}_{p}(X,Y)$, there exists $y=(y_{n}) \in \ell_{p}^{s}(Y)$ such that $$T(B_{X}) \subset E_{y}(B_{\ell_{p^{'}}}):= \left\{\sum\limits_{n=1}^{\infty} a_n y_n : \ (a_n) \in B_{\ell_{p^{'}}}\right\},$$ which is equivalent to $\mathfrak{I}T(B_{X}) \subset E_{(\mathfrak{I}y_{n})}(B_{\ell_{p^{'}}})$. Hence, the proof follows.
\end{proof}
One can easily verify the first part of the following lemma: 
\begin{lem} \label{Lem 2.6}
Let $X$ be a normed linear space, and $Y$ and $Z$ be Banach spaces; $f \in Lip_{0}(X, Y),\ \mathfrak{I} \in L(Y, Z)$ be an isometry. Then 
\begin{enumerate}
    \item $T_{\mathfrak{I}f} = \mathfrak{I}T_{f}$
    \item $k_{p}^{d}(T_{f}) = k_{p}^{Ld}(f)$
    \begin{proof}
     By definition, we have $ k_{p}^{d}(T_{f}) = k_{p}(T_{f}^{*})$. It follows $k_{p}(T_{f}^{*})=k_{p}(Q_{X} \circ f^{t})=k_{p}(f^{t})$, from \cite[Theorem~3.1]{LCO} and by the previous Lemma \ref{Lem2.5}. Now, \cite[Proposition~3.2]{LPCM} implies that $k_{p}(f^{t})=k_{p}^{L}(f^{t})=k_{p}^{Ld}(f)$.
    \end{proof}
    \item $f \in \mathcal{K}_{p}^{Ld}(X,Y)$ if and only if $\mathfrak{I}f \in \mathcal{K}_{p}^{Ld}(X,Z)$ with $k_{p}^{Ld}(\mathfrak{I}f) = k_{p}^{Ld}(f)$.
    \begin{proof}
        Suppose, $\mathfrak{I}f \in \mathcal{K}_{p}^{Ld}(X,Z)$. Therefore, Theorem \ref{THM2.1} gives us $T_{\mathfrak{I}f} \in \mathcal{K}_{p}^{d}(F(X),Z)$. Then, in the first part of this lemma, we get $\mathfrak{I}T_{f} \in \mathcal{K}_{p}^{d}(F(X), Z)$, which is equivalent as $T_{f} \in \mathcal{K}_{p}^{Ld}(F(X), Y)$,(by \cite[Lemma 2.5]{COWFTSLP}). Now Theorem~\ref{THM2.1} provides us $T_{f} \in \mathcal{K}_{p}^{d}(F(X), Y)$ if and only if $f \in \mathcal{K}_{p}^{Ld}(Y,Z)$.
        
    Further, $k_{p}^{Ld}(\mathfrak{I}f)= k_{p}^{d}(T_{\mathfrak{I}f})= k_{p}^{d}(\mathfrak{I}T_{f}) = k_{p}^{d}(T_{f})= k_{p}^{Ld}(f)$ (using part $(2),(1)$ of this lemma and  \cite[Lemma~2.5]{COWFTSLP}).
    \end{proof}
\end{enumerate}
\end{lem}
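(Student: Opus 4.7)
The plan is to prove the three items in order, using the universal property of the Lipschitz linearization (Lemma~\ref{Lem}(4)) as the main workhorse and combining it with Theorem~\ref{THM2.1}, Lemma~\ref{Lem2.5}, and the linear ideal result \cite[Lemma~2.5]{COWFTSLP}. Item~(1) will be immediate from uniqueness in that universal property: both $T_{\mathfrak{I}f}$ and $\mathfrak{I}\circ T_{f}$ are bounded linear maps $F(X)\to Z$ whose precomposition with $\delta_{X}$ equals $\mathfrak{I}\circ f$, so they must coincide.

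For item~(2), the starting point is the identity $T_{f}^{*}=Q_{X}\circ f^{t}$ coming from \cite[Theorem~3.1]{LCO}, where $Q_{X}$ is the isometric isomorphism of Lemma~\ref{Lem}(1). Lemma~\ref{Lem2.5} then strips off $Q_{X}$ to give $k_{p}(T_{f}^{*})=k_{p}(f^{t})$. Unwinding definitions, $k_{p}^{d}(T_{f})=k_{p}(T_{f}^{*})$ on one side and $k_{p}^{Ld}(f)=k_{p}^{L}(f^{t})$ on the other; since $f^{t}$ is linear, the Lipschitz and linear $p$-compact norms agree on it by \cite[Proposition~3.2]{LPCM}, closing the equality.

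For item~(3), I would assemble the pieces already built and chain the equivalences
\[
\mathfrak{I}f\in \mathcal{K}_{p}^{Ld}(X,Z) \Longleftrightarrow T_{\mathfrak{I}f}\in \mathcal{K}_{p}^{d}(F(X),Z) \Longleftrightarrow \mathfrak{I}T_{f}\in \mathcal{K}_{p}^{d}(F(X),Z) \Longleftrightarrow T_{f}\in \mathcal{K}_{p}^{d}(F(X),Y) \Longleftrightarrow f\in \mathcal{K}_{p}^{Ld}(X,Y),
\]
where the two outer equivalences are Theorem~\ref{THM2.1}, the first inner equivalence rewrites $T_{\mathfrak{I}f}$ via item~(1), and the middle equivalence uses \cite[Lemma~2.5]{COWFTSLP} together with the fact that $\mathfrak{I}$ is an isometry. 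Running the analogous chain with norms in place of the qualitative conditions, and inserting item~(2) at the two endpoints, yields
\[
k_{p}^{Ld}(\mathfrak{I}f)=k_{p}^{d}(T_{\mathfrak{I}f})=k_{p}^{d}(\mathfrak{I}T_{f})=k_{p}^{d}(T_{f})=k_{p}^{Ld}(f).
\]
I do not anticipate any serious obstacle; the argument is essentially a careful diagram chase that transports the linear isometric-invariance of $\mathcal{K}_{p}^{d}$ across the linearization bridge, with the only subtlety being the consistent use of the identification $T_{f}^{*}=Q_{X}\circ f^{t}$ and the fact that $k_{p}^{L}$ and $k_{p}$ coincide on linear maps.
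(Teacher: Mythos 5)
Your proposal is correct and follows essentially the same route as the paper: item (1) by uniqueness in the universal property of the linearization, item (2) via the chain $k_{p}^{d}(T_{f})=k_{p}(T_{f}^{*})=k_{p}(Q_{X}\circ f^{t})=k_{p}(f^{t})=k_{p}^{L}(f^{t})=k_{p}^{Ld}(f)$ using the same three ingredients, and item (3) by the same chain of equivalences through Theorem~\ref{THM2.1}, item (1), and \cite[Lemma~2.5]{COWFTSLP}. If anything, your write-up of item (3) is cleaner than the paper's, which contains a couple of typographical slips in the target spaces.
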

\begin{lem} \label{lem2.7}
Let $X$ be a normed linear space and $Y$ be Banach space, then for $f \in Lip_{0}(X,Y)$ with $ T_{f} \in \prod_{p}(F(X),Y)$ implies $f \in \prod_{p}^{L}(X,Y)$ with $\pi_{p}^{L}(f) \leq \pi_{p}(T_{f})$.
\end{lem}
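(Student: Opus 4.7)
The plan is to exploit the linearization $f = T_f \circ \delta_X$ provided by Lemma \ref{Lem}(4) and then leverage two ingredients already cited in the preceding proof of the analogous $p$-compact statement: the fact that every linear $p$-summing operator is also Lipschitz $p$-summing with equal norms (\cite[Theorem~2]{LPSO}), and the ideal property of the Lipschitz $p$-summing operators (\cite[Proposition~2.5]{LOITAP}).

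Concretely, I would argue as follows. Starting from the hypothesis $T_f \in \prod_p(F(X),Y)$, I first invoke \cite[Theorem~2]{LPSO} to regard $T_f$ as an element of $\prod_p^L(F(X),Y)$ with $\pi_p^L(T_f)=\pi_p(T_f)$. Next, I recall that $\delta_X : X \to F(X)$ is Lipschitz with $Lip(\delta_X)=1$, so the composition $T_f \circ \delta_X$ belongs to $\prod_p^L(X,Y)$ by the ideal property \cite[Proposition~2.5]{LOITAP}, with
\[
\pi_p^L(T_f \circ \delta_X) \leq \pi_p^L(T_f)\cdot Lip(\delta_X) = \pi_p(T_f).
\]
Finally, since $T_f \circ \delta_X = f$ by Lemma \ref{Lem}(4), we conclude $f \in \prod_p^L(X,Y)$ with $\pi_p^L(f) \leq \pi_p(T_f)$, as desired.

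There is essentially no obstacle here: the statement is a direct packaging of the linearization property with two off-the-shelf facts. The only thing to be careful about is to apply the ideal property in the correct order (composing a Lipschitz map from the right with a linear $p$-summing operator) and to note that the Lipschitz constant of $\delta_X$ is exactly $1$, so the norm estimate is sharp enough to yield the stated inequality without any extra constant.
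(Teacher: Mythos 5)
Your argument is correct and coincides with the paper's own proof: both pass from $\pi_p(T_f)$ to $\pi_p^L(T_f)$ via \cite[Theorem~2]{LPSO}, then compose with $\delta_X$ (of Lipschitz constant $1$) using the ideal property to get $\pi_p^L(f)=\pi_p^L(T_f\circ\delta_X)\leq\pi_p^L(T_f)=\pi_p(T_f)$. No issues.
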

\begin{proof}
The proof of the first part trivially follows from the fact that $f = T_f \circ \delta_X$.

    Now $f = T_{f} \circ \delta_{X} \implies \pi_{p}^{L}(f)=\pi_{p}^{L}(T_{f} \circ \delta_{X})\leq \pi_{p}^{L}(T_{f}) = \pi_{p}(T_{f})$ (by ideal property and \cite[Theorem 2]{LPSO}).
    \end{proof}
\begin{rem}
$\bullet$ The other-way implication of the above lemma is not true. As a counterexample, for $p=1$ consider $\delta_{\mathbb{R}} \in \prod_{p}^{L}(\mathbb{R},F(\mathbb{R}))$, but $T_{\delta_{\mathbb{R}}} = Id_{F(\mathbb{R})} \notin \prod_{p}(F(\mathbb{R}),F(\mathbb{R}))$.\\
$\bullet$ For a normed linear space $X$, and a finite-dimensional vector space $E$, if $f \in \mathcal{K}_{p}^{Ld}(X,E)$ then $k_{p}^{Ld}(f)=k_{p}^{d}(T_{f}) = \pi_{p}(T_{f})$ (using Lemma \ref{Lem 2.6}(2), \cite[Theorem 2.6]{COWFTSLP}), for $1 \leq p < \infty$.
\end{rem}
\subsection{Factorization of $\mathcal{K}_{p}^{Ld}$}
\begin{prop}\label{3.10}
    Let $X$ be a normed linear space, $Y$ be a Banach space, $1 \leq p < \infty$, then $f \in \mathcal{K}_{p}^{Ld}(X,Y)$ if and only if there exists a Banach space $Z$, $R \in Lip_{0\mathcal{K}}(X,Z)$, $U \in \prod_{p}(Z,Y)$ such that $f = UR$.
\end{prop}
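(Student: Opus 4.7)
The natural strategy is to transport the factorization problem from the Lipschitz world to the linear world via the linearization functor $T_f$, apply the Karn--Sinha linear decomposition $\Pi_p \circ \mathcal{K} = \mathcal{K}_p^d$ recalled in the preamble of Section \ref{FOLPCO}, and then transport the factors back using Theorem \ref{THM2.1}. The pivotal bridge is the standard equivalence $R \in Lip_{0\mathcal{K}}(X,Z) \iff T_R \in \mathcal{K}(F(X),Z)$, which follows from the uniqueness clause in Lemma \ref{Lem}(4) together with the Lipschitz-compact theory developed in \cite{LOITAP}.

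For the \emph{necessity} direction, assume $f \in \mathcal{K}_p^{Ld}(X,Y)$. Theorem \ref{THM2.1} delivers $T_f \in \mathcal{K}_p^d(F(X),Y)$. Invoking the Karn--Sinha identity $\mathcal{K}_p^d = \Pi_p \circ \mathcal{K}$, I can write $T_f = U \circ V$ through a Banach space $Z$ with $V \in \mathcal{K}(F(X),Z)$ and $U \in \Pi_p(Z,Y)$. Setting $R := V \circ \delta_X$, the composition $R$ is Lipschitz, and by uniqueness of the linearization (Lemma \ref{Lem}(4)) one has $T_R = V$; hence the equivalence mentioned above forces $R \in Lip_{0\mathcal{K}}(X,Z)$. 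Finally $f = T_f \circ \delta_X = U \circ V \circ \delta_X = U \circ R$ is the required factorization.

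For the \emph{sufficiency} direction, suppose $f = U \circ R$ with $R \in Lip_{0\mathcal{K}}(X,Z)$ and $U \in \Pi_p(Z,Y)$. Applying uniqueness of linearization to the equality $f \circ \operatorname{id} = (U\circ T_R)\circ \delta_X = U \circ R = f$, one identifies $T_f = U \circ T_R$. Now $T_R$ is compact (by Lipschitz compactness of $R$) and $U$ is $p$-summing, so $T_f \in \Pi_p \circ \mathcal{K}(F(X),Y) = \mathcal{K}_p^d(F(X),Y)$ by Karn--Sinha. One more appeal to Theorem \ref{THM2.1} yields $f \in \mathcal{K}_p^{Ld}(X,Y)$.

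The proof is essentially a translation exercise; the only point that requires a moment of care is the identification $T_{V\circ\delta_X}=V$ and, symmetrically, $T_{U\circ R} = U\circ T_R$, both of which are immediate from the uniqueness of the linearization. Consequently, no genuine analytical obstacle arises: the whole argument reduces to applying Theorem \ref{THM2.1} as a dictionary and quoting the linear decomposition theorem of \cite{COWFTSLP}.
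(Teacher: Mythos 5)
Your proposal is correct and follows essentially the same route as the paper: both directions pass through Theorem \ref{THM2.1} to translate $f \in \mathcal{K}_p^{Ld}(X,Y)$ into $T_f \in \mathcal{K}_p^d(F(X),Y)$, invoke the Karn--Sinha decomposition $\mathcal{K}_p^d = \Pi_p \circ \mathcal{K}$ from \cite{COWFTSLP}, and identify $R = V\circ\delta_X$ (resp.\ $T_f = U\circ T_R$) via uniqueness of the linearization. Your explicit verification of $T_{V\circ\delta_X}=V$ and $T_{U\circ R}=U\circ T_R$ only spells out what the paper leaves as ``easy to verify.''
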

\begin{proof}
    Suppose $f \in \mathcal{K}_{p}^{Ld}(X,Y)$. Then by Theorem \ref{THM2.1}, $ T_{f} \in \mathcal{K}_{p}^{d}(F(X),Y)$. Again, by \cite[Theorem~3.1]{COWFTSLP} there exist a Banach space $Z$, $V \in \mathcal{K}(F(X),Z)$ and $U \in \prod_{p}(Z,E)$ such that $T_{f} = UV \ \mbox{that is.} \ f = T_{f} \circ \delta_{X} = UV\delta_{X} = UR$ where $R = V\delta_{X} \in Lip_{0\mathcal{K}}(X,Z)$ and $U \in \prod_{p}(Z,Y) \subset \prod_{p}^{L}(Z,Y)$.
    
    Conversely, let $Z$ be a Banach space, $f \in Lip_{0}(X,Y)$, and there exists $R \in Lip_{0\mathcal{K}}(X,Z)$, $U \in \prod_{p}(Z,Y)$ such that $f = UR$. Now it is easy to verify that $T_{f} = UT_{R} \in \mathcal{K}_{p}^{d}(F(X),Y)$, which implies that $f \in \mathcal{K}_{p}^{Ld}(X,Y)$.
\end{proof}
\begin{prop}\label{3.11}
    Let $X$ and $Y$ be Banach spaces, and $f \in Lip_{0\mathcal{K}}(X,Y)$. Then $f$ can be decomposed as $f= Cg$, where $C \in Lip_{0}(g(X),Y)$ and $g \in Lip_{0\mathcal{K}}(X,c_0)$, where $g(X)$ is considered as a pointed metric subspace of $c_0$.
\end{prop}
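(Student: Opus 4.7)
The plan is to linearize, apply the classical factorization of a compact linear operator through a subspace of $c_{0}$, and then pull everything back through $\delta_{X}$. Since $f \in Lip_{0\mathcal{K}}(X,Y)$, the linearization $T_{f} : F(X) \to Y$ is a compact linear operator --- this is the Lipschitz analogue of Lemma \ref{Lem}(4) for the compact ideal, which is already implicit in the proofs of Theorem \ref{THM2.1} and Proposition \ref{3.10} --- so it suffices to factor $T_{f}$ in the linear category and then restrict.

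For the linear factorization I would follow Terzio\u{g}lu's recipe. Since $T_{f}^{*} : Y^{*} \to F(X)^{*}$ is compact, Grothendieck's lemma furnishes a sequence $(h_{n}) \subset F(X)^{*}$ with $\|h_{n}\| \to 0$ such that $T_{f}^{*}(B_{Y^{*}}) \subseteq \overline{\mathrm{absconv}}\{h_{n}\}$. I then define $R : F(X) \to c_{0}$ by
\[
R(\mu) := (h_{n}(\mu))_{n \geq 1},
\]
which is bounded linear (because $|h_{n}(\mu)| \leq \|h_{n}\|\|\mu\| \to 0$) and compact, since $\sup_{\mu \in B_{F(X)}} |h_{n}(\mu)| = \|h_{n}\| \to 0$ verifies the standard criterion for relative compactness in $c_{0}$. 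The key estimate is
\[
\|T_{f}(\mu)\|_{Y} \leq \|R(\mu)\|_{c_{0}} \qquad \text{for every } \mu \in F(X),
\]
which I obtain from the duality $\|T_{f}\mu\| = \sup_{\psi \in B_{Y^{*}}} |T_{f}^{*}(\psi)(\mu)|$ combined with the absolutely convex-hull representation of each $T_{f}^{*}(\psi)$ in terms of the $h_{n}$. This inequality lets me define a $1$-Lipschitz linear map $S : R(F(X)) \to Y$ unambiguously by $S(R\mu) := T_{f}\mu$.

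To finish, I set $g := R \circ \delta_{X} : X \to c_{0}$ and $C := S\bigl|_{g(X)} : g(X) \to Y$. Then $T_{g} = R$ is compact, so $g \in Lip_{0\mathcal{K}}(X, c_{0})$; the restriction $C$ inherits Lipschitz continuity from the bounded linear map $S$ and satisfies $C(0) = 0$, hence $C \in Lip_{0}(g(X), Y)$ when $g(X)$ is viewed as a pointed metric subspace of $c_{0}$; and $C(g(x)) = S(R(\delta_{x})) = T_{f}(\delta_{x}) = f(x)$ delivers the decomposition $f = C \circ g$. The main obstacle is the factorization inequality $\|T_{f}\mu\| \leq \|R\mu\|_{c_{0}}$: it is exactly what forces $S$ to be well defined on $R(F(X))$, and its proof is where Grothendieck's null-sequence representation does the real work; the rest is bookkeeping via the Lipschitz-free linearization machinery already used throughout the paper.
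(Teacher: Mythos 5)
Your argument is correct and is essentially the paper's own proof: both rest on Grothendieck's null-sequence representation of the compact transpose (a sequence of functionals tending to $0$ in norm whose closed absolutely convex hull captures the image of $B_{Y^{*}}$), the coordinatewise definition $g(x)=(u_{n}(x))$, and the same duality estimate $\|f(x_1)-f(x_2)\|\leq\|g(x_1)-g(x_2)\|$ to make $C$ well defined and $1$-Lipschitz. The only cosmetic difference is that you route through the linearization, applying the lemma to $T_{f}^{*}:Y^{*}\to F(X)^{*}$ and verifying $g\in Lip_{0\mathcal{K}}(X,c_0)$ via compactness of $T_{g}=R$, whereas the paper applies it directly to the Lipschitz transpose $f^{t}:Y^{*}\to X^{\#}$ and checks the Lipschitz compactness of $g$ by hand; under the identification $F(X)^{*}\cong X^{\#}$ these are the same factorization.
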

\begin{proof}
     Suppose $X$ and $Y$ are Banach spaces, and $f \in Lip_{0K}(X, Y)$ is a Lipschitz compact operator. Therefore, by \cite[Proposition 3.5]{LCO}, $f^{t} \in \mathcal{K}(Y^{*},X^{\#})$ and hence by using \cite[Proposition 1.e.2]{CBS} we get $f^{t}(B_{Y^{*}}) \subset E_{(u_{n})}(B_{\ell_{1}}):= \left\{\sum\limits_{n=1}^{\infty} a_n u_n : \ (a_n) \in B_{\ell_1}\right\}$ for some $(u_{n}) \in c_{0}(X^{\#})$. Consider the map $ g: X \xrightarrow{} c_{0}$, defined as $g(x)=(u_{n}(x))$ for all $x \in X$. Since $\left\{\frac{g(x_1)-g(x_2)}{\|x_1-x_2\|}: x_1 \neq x_2; \ x_1,x_2 \in X\right\} \subset co\{(\|u_{n}\|): n \in \mathbb{N}\}$, we have $g \in Lip_{0\mathcal{K}}(X, c_0)$. 

     Let $x_1, x_2 \in X$ be such that $u_n(x_1) = u_n(x_2)$ for all $n \in \mathbb{N}$. We show that $f(x_1) = f(x_2)$. Let $y^* \in B_{Y^*}$. Then there exists a sequence $(\alpha_n) \in B_{\ell_1}$ such that $(f^t)(y^*) = \sum\limits_{n=1}^{\infty} \alpha_n u_n$. Thus 
    \begin{eqnarray*}
    	y^*(f(x_1)) &=& (f^t)(y^*)(x_1) \\ 
    	&=& \sum\limits_{n=1}^{\infty}\alpha_n u_n(x_1) \\ 
    	&=& \sum\limits_{n=1}^{\infty} \alpha_n u_n(x_2) \\ 
   	&=& (f^t)(y^*)(x_2) \\ 
   	&=& y^*(f(x_2)).
    \end{eqnarray*}
    Since $y^* \in B_{Y^*}$ is arbitrary, we get $f(x_1) = f(x_2)$. Therefore, the map $C: g(X) \xrightarrow{} Y$ given by $C(g(x))=f(x)$ is well-defined. We show that $C$ is Lipschitz. Fix $x_1, x_2 \in X$. If $y^* \in B_{Y^{*}}$, then as above $y^*(f(x)) = (f^t)(y^{\ast})(x) = \sum\limits_{n=1}^{\infty} \alpha_n u_n(x)$. Thus 
    $$\vert y^*(f(x_1)-f(x_2)) \vert = \vert \sum\limits_{n=1}^{\infty} \alpha_n (u_n(x_1)-u_n(x_2)) \vert \le \sup \lbrace \Vert u_n(x_1)-u_n(x_2) \Vert: n \in \mathbb{N} \rbrace = \Vert g(x_1)-g(x_2) \Vert.$$ 
    Hence 
    $$\Vert C(g(x_1)) -C(g(x_2))\Vert = \Vert f(x_1) -f(x_2)\Vert = \sup_{y^* \in B_{Y^*}} \vert y^*(f(x_1)-f(x_2)) \vert \le \Vert g(x_1) -g(x_2)\Vert.$$
    so that $C$ is Lipschitz. Hence, the proposition follows.
\end{proof}
Playing with the Lipschitz map and its corresponding linearization map leads to the proof of the following proposition.
\begin{prop}\label{prop3.14}
    Let $X$ and $Y$ be Banach spaces. Then  $\overline{Lip_{0\mathcal{F}}(Y, X)}^{k_{p}^{Ld}} = \mathcal{K}_{p}^{Ld}(Y, X)$ if and only if $\overline{\mathcal{F}(F(Y), X)}^{k_{p}^{d}}=\mathcal{K}_{p}^{d}(F(Y), X)$. 
\end{prop}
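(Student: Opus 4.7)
The natural approach is to transfer the problem from the nonlinear side to the linear side using the linearization isomorphism from Lemma \ref{Lem}(4). Define $\Phi : Lip_{0}(Y, X) \to L(F(Y), X)$ by $\Phi(f) = T_{f}$. By Lemma \ref{Lem}(4) this is a bijective linear isometry with respect to $Lip(\cdot)$ and the operator norm; the inverse is $S \mapsto S \circ \delta_{Y}$, since by uniqueness $T_{S \circ \delta_{Y}} = S$. I plan to show that $\Phi$ simultaneously identifies the two subspaces appearing in the equivalence.

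First, Lemma \ref{Lem}(4) yields $\Phi(Lip_{0\mathcal{F}}(Y, X)) = \mathcal{F}(F(Y), X)$, since $f$ is Lipschitz finite-rank iff $T_{f}$ is linear finite-rank. Next, applying Theorem \ref{THM2.1} (with the roles of the domain and codomain Banach spaces reassigned as $Y$ and $X$), we obtain $\Phi(\mathcal{K}_{p}^{Ld}(Y, X)) = \mathcal{K}_{p}^{d}(F(Y), X)$ together with the crucial norm equality $k_{p}^{d}(T_{f}) = k_{p}^{Ld}(f)$. Hence the restriction of $\Phi$ to $\mathcal{K}_{p}^{Ld}(Y, X)$ is an isometric isomorphism onto $\mathcal{K}_{p}^{d}(F(Y), X)$, carrying the ideal-norm $k_{p}^{Ld}$ to $k_{p}^{d}$ and the finite-rank subspace $Lip_{0\mathcal{F}}(Y, X)$ onto $\mathcal{F}(F(Y), X)$.

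Once this dictionary is set up, the equivalence is a formal consequence. If $\overline{Lip_{0\mathcal{F}}(Y, X)}^{k_{p}^{Ld}} = \mathcal{K}_{p}^{Ld}(Y, X)$, then applying the isometry $\Phi$ term-by-term to a $k_{p}^{Ld}$-Cauchy approximating net produces a $k_{p}^{d}$-Cauchy net in $\mathcal{F}(F(Y), X)$ with the same limit under $\Phi$, yielding $\overline{\mathcal{F}(F(Y), X)}^{k_{p}^{d}} = \mathcal{K}_{p}^{d}(F(Y), X)$. The reverse direction is symmetric using $\Phi^{-1}$.

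The only point that requires care is the interplay between the norm equality $k_{p}^{d}(T_{f}) = k_{p}^{Ld}(f)$ and the closure operations — one must verify that the induced map $\Phi : (\mathcal{K}_{p}^{Ld}(Y, X), k_{p}^{Ld}) \to (\mathcal{K}_{p}^{d}(F(Y), X), k_{p}^{d})$ is a homeomorphic isomorphism of Banach spaces, so that closures on either side are genuinely transported to closures on the other. This follows directly from Theorem \ref{THM2.1} together with the linearity and bijectivity of $\Phi$ established from Lemma \ref{Lem}(4). After these structural observations, no further technical computation is needed, so the principal task is simply to assemble the linearization correspondence cleanly at the level of Banach operator ideals.
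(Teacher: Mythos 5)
Your proposal is correct and matches the paper's intended argument exactly: the paper gives no written proof beyond the remark that ``playing with the Lipschitz map and its corresponding linearization map'' yields the result, and your use of the linearization correspondence $f \mapsto T_{f}$ from Lemma \ref{Lem}(4) together with the isometric identification $k_{p}^{d}(T_{f}) = k_{p}^{Ld}(f)$ from Theorem \ref{THM2.1} is precisely that argument, carried out in full.
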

The definition of a Banach space having an approximation property of type $p$ is used in the next proposition and can be found in \cite[Definition~4.4]{COWFTSLP}.
\begin{prop}
    If a Banach space $X$ has the approximation property of type $p$, then $\overline{Lip_{0\mathcal{F}}(Y,X)}^{k_{p}^{Ld}} = \mathcal{K}_{p}^{Ld}(Y,X)$. 
\end{prop}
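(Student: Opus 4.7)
The plan is to reduce the Lipschitz density statement to a purely linear density statement about the Lipschitz-free space $F(Y)$, and then invoke the known linear characterization of the approximation property of type $p$ from Karn--Sinha \cite{COWFTSLP}.

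First I would apply Proposition \ref{prop3.14} directly: it tells us that the equality
\[
\overline{Lip_{0\mathcal{F}}(Y,X)}^{k_{p}^{Ld}} = \mathcal{K}_{p}^{Ld}(Y,X)
\]
is equivalent to the linear density relation
\[
\overline{\mathcal{F}(F(Y),X)}^{k_{p}^{d}} = \mathcal{K}_{p}^{d}(F(Y),X).
\]
So the Lipschitz problem reduces to showing that every element of $\mathcal{K}_p^d(F(Y), X)$ can be approximated, in the $k_p^d$-norm, by finite-rank operators from $F(Y)$ to $X$.

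Next, I would invoke the linear characterization associated with Definition~4.4 of \cite{COWFTSLP}: a Banach space $X$ has the approximation property of type $p$ precisely when, for every Banach space $Z$, finite-rank operators are $k_p^d$-dense in $\mathcal{K}_p^d(Z, X)$. Since $F(Y)$ is a Banach space, specializing $Z := F(Y)$ immediately gives the required linear density. Combined with Proposition \ref{prop3.14}, this yields the Lipschitz statement.

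The only potential subtlety — and hence the main point to verify carefully — is the availability of the linear characterization in the exact form needed, namely that the ``approximation property of type $p$'' of $X$ is equivalent to $k_p^d$-approximability of $\mathcal{K}_p^d(Z, X)$ by $\mathcal{F}(Z, X)$ for \emph{every} Banach space $Z$ (rather than merely for certain $Z$). Once this is quoted from \cite{COWFTSLP}, no estimates or constructions on the Lipschitz side are needed: Proposition \ref{prop3.14} carries all the nonlinear content, and the conclusion follows with essentially no further calculation.
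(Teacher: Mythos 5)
Your proposal is correct and follows essentially the same route as the paper: the paper's proof likewise consists of combining Proposition \ref{prop3.14} with the definition of the approximation property of type $p$ from \cite[Definition~4.4]{COWFTSLP}, which is stated precisely as $k_p^d$-density of $\mathcal{F}(Z,X)$ in $\mathcal{K}_p^d(Z,X)$ for every Banach space $Z$, so the subtlety you flag does not arise. Specializing $Z = F(Y)$ and applying Proposition \ref{prop3.14} is exactly the intended argument.
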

\begin{proof}
    The proof follows directly from the definition and the previous proposition \ref{prop3.14}.
\end{proof}
\begin{prop}
    Let $X$ be a Banach space having the $p$-approximation property. Then for all Banach space $Y$; $\mathcal{K}_{p}^{L}(Y,X) \subset \overline{Lip_{0\mathcal{F}}(Y,X)}^{Lip(.)}$.
\end{prop}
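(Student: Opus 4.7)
The plan is to pass from $Y$ to its Lipschitz-free space $F(Y)$, use the $p$-approximation property of $X$ on the image of the closed unit ball of $F(Y)$, and then pull the approximation back through the linearization formula $f = T_f \circ \delta_Y$.

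First I would recall (from \cite{LPCM}) that a Lipschitz operator $f \in Lip_0(Y,X)$ belongs to $\mathcal{K}_p^L(Y,X)$ if and only if its linearization $T_f : F(Y) \to X$ belongs to the linear ideal $\mathcal{K}_p(F(Y),X)$. So given $f \in \mathcal{K}_p^L(Y,X)$ and $\epsilon>0$, the set $T_f(B_{F(Y)}) \subset X$ is relatively $p$-compact in $X$. Since $X$ has the linear $p$-approximation property, there exists a finite-rank linear operator $S \in \mathcal{F}(X,X)$ such that
\[
\sup_{\gamma \in B_{F(Y)}} \|S T_f(\gamma) - T_f(\gamma)\| \;<\; \epsilon,
\]
i.e.\ $\|(I_X - S)T_f\|_{L(F(Y),X)} \leq \epsilon$.

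Next, I would set $g := S \circ f = S \circ T_f \circ \delta_Y$. Then $g \in Lip_0(Y,X)$ takes values in the finite-dimensional subspace $S(X)$, so $g \in Lip_{0\mathcal{F}}(Y,X)$. Its linearization is $T_g = S \circ T_f$, so by linearity of the assignment $h \mapsto T_h$ (Lemma \ref{Lem}(4)), the linearization of $f-g$ is $T_{f-g} = T_f - S T_f = (I_X - S)T_f$. Since the correspondence $h \mapsto T_h$ is an isometry with respect to the Lipschitz norm and the operator norm, we conclude
\[
Lip(f-g) \;=\; \|T_{f-g}\| \;=\; \|(I_X - S)T_f\|_{L(F(Y),X)} \;\leq\; \epsilon.
\]
As $\epsilon>0$ was arbitrary, $f \in \overline{Lip_{0\mathcal{F}}(Y,X)}^{Lip(\cdot)}$, which completes the proof.

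The only nontrivial ingredient is the equivalence $f \in \mathcal{K}_p^L(Y,X) \Leftrightarrow T_f \in \mathcal{K}_p(F(Y),X)$ from \cite{LPCM}; once this is in hand, the remaining steps are soft and mirror the classical argument that $p$-compact operators into a Banach space with the $p$-a.p.\ can be approximated in operator norm by finite-rank operators. The essential reason this works in Lipschitz norm (rather than only in $\tau_p$) is that the Lipschitz norm corresponds exactly to the operator norm of the linearization, so the uniform approximation of the identity on $T_f(B_{F(Y)})$ automatically yields a Lipschitz-norm approximation of $f$.
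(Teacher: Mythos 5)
Your proof is correct, but it takes a genuinely different (and heavier) route than the paper: you linearize, whereas the paper argues directly at the level of the Lipschitz map. The paper's proof simply observes that, by the very definition of $\mathcal{K}_{p}^{L}(Y,X)$, the set $Im_{Lip}(f):=\bigl\{\frac{f(x_1)-f(x_2)}{\|x_1-x_2\|} : x_1\neq x_2\bigr\}$ is relatively $p$-compact in $X$; the $p$-approximation property then yields a finite-rank $S\in\mathcal{F}(X,X)$ with $\sup_{z\in Im_{Lip}(f)}\|Sz-z\|<\epsilon$, and since $Lip(Sf-f)=\sup_{x_1\neq x_2}\|(S-Id_X)(f(x_1)-f(x_2))\|/\|x_1-x_2\|\le\sup_{z\in Im_{Lip}(f)}\|Sz-z\|$, the proof closes in two lines using nothing beyond the definitions. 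You instead invoke the nontrivial equivalence $f\in\mathcal{K}_{p}^{L}(Y,X)\Leftrightarrow T_f\in\mathcal{K}_{p}(F(Y),X)$ from \cite{LPCM}, apply the $p$-a.p.\ to the larger set $T_f(B_{F(Y)})$ (essentially the closed absolutely convex hull of $Im_{Lip}(f)$, which is why relative $p$-compactness is preserved), and transport the operator-norm estimate back through the isometry $h\mapsto T_h$. Both arguments end up with the same approximant $S\circ f$; yours gives the formally stronger estimate $\|(Id_X-S)T_f\|\le\epsilon$ and is consistent with the ``linearize everything'' viewpoint used elsewhere in the paper, but it imports the characterization of Lipschitz $p$-compactness via linearizations, which the direct argument does not need.
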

\begin{proof}
    Let $Y$ be a Banach space, $f \in \mathcal{K}_{p}^{L}(Y,X)$ and $\epsilon>0$. Therefore, as $X$ has $p.a.p$ and $Im_{Lip}(f) :=\left\{\frac{f(x_1)-f(x_2)}{\|x_1-x_2\|} : x_1 \neq x_2\right\}$ is relatively $p$-compact, there exists $S \in \mathcal{F}(X,X)$ such that $\sup\limits_{z \in Im_{Lip}(f)} \|Sz -z\|<\epsilon$. Hence, it gives $Lip(Sf-f) <\epsilon$, which completes the proof.
\end{proof}
%\section{\bf Some auxiliary reslults}
In the following proposition, depending on the properties of the Lipschitz map $\delta_{X}$, we characterize their domain.
\begin{prop}
    For any Banach space $X$, $\beta_{X}: F(X)\xrightarrow{}X$ is compact iff $dim(X)$ is finite. 
\end{prop}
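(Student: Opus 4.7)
The plan is to handle the two directions separately, with the easy direction being the ``if'' part and the interesting direction being an application of Riesz's theorem.

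For the direction $\dim(X) < \infty \implies \beta_X$ compact: I would simply observe that $\beta_X \in L(F(X), X)$ and that the codomain $X$ is finite-dimensional. Since every bounded linear operator with finite-dimensional range is compact, the conclusion is immediate.

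For the converse $\beta_X$ compact $\implies \dim(X) < \infty$: the idea is to show $B_X$ is relatively compact and then invoke Riesz's theorem. The key ingredient is that $\delta_X \colon X \to F(X)$ is an isometric embedding satisfying $\delta_X(0) = 0$, so $\|\delta_x\|_{F(X)} = \|x\|_X$ for every $x \in X$. Consequently $\delta_X(B_X) \subseteq B_{F(X)}$. Now using Lemma \ref{Lem}(3), i.e.\ $\beta_X \circ \delta_X = Id_X$, we obtain
\[
B_X = Id_X(B_X) = \beta_X(\delta_X(B_X)) \subseteq \beta_X(B_{F(X)}).
\]
If $\beta_X$ is compact, then $\beta_X(B_{F(X)})$ is relatively compact in $X$, and hence so is its subset $B_X$. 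Thus the closed unit ball of $X$ is compact, and Riesz's theorem forces $\dim(X) < \infty$.

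\textbf{Main obstacle.} There is essentially no serious obstacle here; the whole argument hinges on remembering that $\delta_X$ is an isometric embedding (so $B_X$ lifts into $B_{F(X)}$) and on the identity $\beta_X \circ \delta_X = Id_X$ recorded in Lemma \ref{Lem}(3). Once these two facts are in place, the Riesz characterization of finite-dimensional normed spaces via compactness of the unit ball finishes the job in one line.
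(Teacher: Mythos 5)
Your proof is correct and follows essentially the same route as the paper: the paper's (much terser) argument also rests on the observation that $\delta_X(B_X)\subset B_{F(X)}$, so that $B_X=\beta_X(\delta_X(B_X))$ is relatively compact whenever $\beta_X$ is, forcing $\dim(X)<\infty$ by Riesz's theorem, while the converse follows because $\beta_X$ then has finite-dimensional range. Your write-up just makes explicit the steps the paper leaves to the reader.
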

\begin{proof}
    Observe that $\delta_{X}(B_{X}) \subset B_{F(X)}$ and this will give us $\beta_{X}$ is compact implies $dim(X)<\infty$.
    The converse part is easy to show. 
\end{proof}
\begin{prop}
    For a Banach space $X$, if $\delta_{X}:X\xrightarrow{}F(X) $ is Lipschitz $p$-compact, then $dim(X)$ is finite.
\end{prop}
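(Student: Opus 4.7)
The plan is to transfer the relative compactness coming from Lipschitz $p$-compactness of $\delta_X$ downstairs to $X$ via the quotient map $\beta_X$, and then recover the unit sphere of $X$ in the image so that Riesz's theorem forces finite dimensionality.

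First I would unpack the hypothesis: if $\delta_X \in Lip_{0\mathcal{K}_p}(X,F(X))$, then the set of molecules
\[
M = \left\{\frac{\delta_{X}(x_1)-\delta_{X}(x_2)}{\|x_1-x_2\|} : x_1,x_2\in X,\; x_1\neq x_2\right\}
\]
is relatively $p$-compact in $F(X)$. Since every $p$-compact set is compact (as already noted in the preliminaries), $M$ is relatively compact in $F(X)$.

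Next I would push $M$ forward by the canonical quotient map $\beta_X : F(X)\to X$ from Lemma \ref{Lem}(3). As $\beta_X$ is bounded linear and satisfies $\beta_X\circ \delta_X = Id_X$, linearity gives
\[
\beta_X\!\left(\frac{\delta_{X}(x_1)-\delta_{X}(x_2)}{\|x_1-x_2\|}\right) = \frac{x_1-x_2}{\|x_1-x_2\|},
\]
so $\beta_X(M)$ is relatively compact in $X$. Specializing $x_2=0$ shows $S_X = \{x/\|x\| : x\in X\setminus\{0\}\} \subset \beta_X(M)$, hence $S_X$ is relatively compact and therefore $\overline{S_X}$ is compact.

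Finally I would promote compactness from the sphere to the unit ball: $B_X$ is the image of the compact product $[0,1]\times\overline{S_X}$ under the continuous scalar multiplication $(t,x)\mapsto tx$, so $B_X$ is compact. By Riesz's theorem this forces $\dim X < \infty$. No step here looks hard; the only place that needs a moment of care is the passage from compactness of $S_X$ to compactness of $B_X$, and this is handled cleanly by the continuous-image-of-a-compact-set argument above.
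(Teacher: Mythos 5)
Your proof is correct and follows essentially the same route as the paper's: both transfer the relatively ($p$-)compact molecule set of $\delta_X$ down through the linear quotient $\beta_X$ to conclude that $Id_X$ is Lipschitz compact (equivalently, that $S_X$ is relatively compact), and then finite dimensionality follows from Riesz's theorem. The paper packages this as the factorization $f=\beta_Y\hat{f}\delta_X$ for an arbitrary $f\in Lip_0(X,Y)$ before specializing to $f=Id_X$, whereas you argue directly with $\beta_X$ and spell out the Riesz step explicitly, but the content is identical.
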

\begin{proof}
    Let $Y$ be any Banach space and $f \in Lip_{0}(X,Y)$. Now $\beta_{Y} \hat{f}\delta_{X} =f$ and $Im_{Lip}(f)= \beta_{Y}\hat{f}(Im_{Lip}(\delta_{X}))$. This implies that $f$ is Lipschitz $p$-compact and hence Lipschitz compact. Therefore, in particular, for $Y=X$ and $f =Id_{X}$, $f$ is $p$-compact. Hence, this is possible only if  $X$ is of finite dimension.
\end{proof}
\begin{rem}
    The converse of the above proposition is not true. Consider $\delta_{\mathbb{R}}$ (is never a Lipschitz $p$-compact) as a counter-example. 
\end{rem}
\begin{itemize}
    \item We observed the facts that $X^{\#}$ or $F(X)$ has approximation property implies $X$ has $Lip-a.p$ and also that $F(X)$ has $Lip-a.p$ implies $X$ has $Lip-p.a.p$. 
These would be interesting questions to analyze the converse part of those facts.
\item Examples of the Banach spaces without having $Lip-a.p, \ Lip-p.a.p.$ 
\item  We have seen that $a.p \implies Lip-a.p \implies Lip-p.a.p;$ also, $a.p \implies p.a.p \implies Lip-p.a.p.$ So the immediate question to ask is, What can we conclude about the other-way implications for all the cases?
\end{itemize}
 We leave these questions open for the time being.
 
 {\bf Acknowledgement.} The author gratefully acknowledges Professor Anil Karn for his valuable guidance in understanding linear $p$-compact sets and $p$-approximation property.
 
{\bf Declaration.} 
The author acknowledges that he was financially supported by the Senior Research Fellowship from the National Institute of Science Education and Research,s Bhubaneswar, funded by the Department of Atomic Energy, Government of India. The authors have no competing interests or conflicts of interest to declare that are relevant to the content of this article.
%\bibliographystyle{plain}

%  This inserts the bib file, biblio.bib
%\bibliography{reference}
%\bibliography{reference}

% This command signals the end of the file.
\end{document}